\numberwithin{equation}{section}
\theoremstyle{plain}
\newtheorem{thm}[equation]{Theorem}
\newtheorem{lemma}[equation]{Lemma}
\newtheorem{proposition}[equation]{Proposition}
\theoremstyle{definition}
\newtheorem{definition}[equation]{Definition}
\newtheorem{remark}[equation]{Remark}
\newcommand{\C}{\mathbb{C}}
\newcommand{\LL}{\mathcal{L}}
\DeclareMathOperator{\gap}{gap}
\DeclareMathOperator{\charp}{char}
\newcommand{\iter}[1]{^{\langle #1\rangle}}
\title{Functional equations in polynomials}
\author{Zhan Jiang}
\address{
Zhan Jiang \\
  Department of Mathematics \\
  University of Michigan \\
  Ann Arbor, MI 48109--1043 \\
  USA
}
\email{zoeng@umich.edu}
\urladdr{\url{www-personal.umich.edu/~zoeng/}}
\author{Michael E. Zieve}
\address{
Michael Zieve \\
  Department of Mathematics \\
  University of Michigan \\
  Ann Arbor, MI 48109--1043 \\
  USA
}
\email{zieve@umich.edu}
\urladdr{\url{www.math.lsa.umich.edu/~zieve/}}
\date{}
\begin{document}

\begin{abstract}
We determine all $F,G\in\C[X]$ of degree at least $2$ for which the semigroup generated by $F$ and $G$
under composition is not the free semigroup on the letters $F$ and $G$.  We also solve the same problem
for $F,G\in X^2\C[[X]]$, and prove partial analogues over arbitrary fields.
\end{abstract}

\thanks{The authors thank Tien-Cuong Dinh for raising the question studied in this paper, and
Congling Qiu for observing that our proof for polynomials yields an analogous
result for power series.  The second author thanks the NSF for support under grants DMS-1162181 and DMS-1601844.}

\maketitle


\section{Introduction}

Around 1920, Fatou, Julia and Ritt proved several remarkable results about functional equations.
These include, among other things, classifications of all pairs of complex polynomials $F,G\in\C[X]$
of degree at least $2$ which either commute or have a common power under the operation of functional
composition.  In other words, they solved each of the equations
\begin{equation} \label{commeq}
F \circ G = G \circ F
\end{equation}
and
\begin{equation} \label{itereq}
F\iter{r} = G\iter{s}\quad\text{ (for fixed $r,s>0$)}
\end{equation}
in polynomials $F,G\in\C[X]$ of degree at least $2$, where $F\iter{r}$ denotes the $r$-th iterate of $F$.
These results have played a fundamental role in many subsequent investigations, for instance
\cite{BD, BGT, BT, GN, GTZ1, GTZ2, GY, I, MS, N, Y}.

Both (\ref{commeq}) and (\ref{itereq}) express an equality between two compositions
of copies of $F$ and $G$, and it is natural  to study further equalities of this form.  In this paper we classify
all $F,G\in\C[X]$ of degree at least $2$ for which there is a nontrivial equality between two compositions of
copies of $F$ and $G$.    This answers a question posed by Tien-Cuong Dinh in 2009.

Our first conclusion is that if the semigroup generated by $F$ and $G$ under composition is not the free
semigroup on the letters $F$ and $G$,
then there is an integer $r>0$ for which  $F\iter{r} \circ G$  and  $F\iter{r}$  commute.
  Thus, if there is any functional relationship between $F$ and $G$ then there is a relationship
of the very simple type  $F\iter{r} \circ G \circ F\iter{r} = F\iter{2r} \circ G$.  We deduce this unexpected
conclusion more generally for certain classes of rational functions over an arbitrary field:
 
 \begin{thm} \label{ratfun}
Let $K$ be an algebraically closed field, let $m,n>1$ be integers not divisible by $\charp(K)$, and let 
$A,B,C,D\in K[X]$ be nonzero
polynomials such that $\deg(A)-\deg(B)=m$ and $\deg(C)-\deg(D)=n$.  If we put $F:=A/B$ and $G:=C/D$
then the following are equivalent:
\renewcommand{\theenumi}{\ref{ratfun}.\arabic{enumi}}
\renewcommand{\labelenumi}{(\thethm.\arabic{enumi})}
\begin{enumerate}
\item\label{ratfun1} the semigroup generated by $F$ and $G$ under composition is not the free semigroup on the
letters $F$ and $G$
\item\label{ratfun2} there is a root of unity $\gamma\in K^*$ for which the semigroup generated by $F$ and $G$ is
isomorphic to the semigroup generated by $X^m$ and $\gamma X^n$, via an isomorphism which maps $F$ and
$G$ to $X^m$ and $\gamma X^n$
\item\label{ratfun3} there exists $r>0$ for which  $F\iter{r} \circ G$  and  $F\iter{r}$  commute
\item\label{ratfun4} there exist $i\ge 0$ and $j>0$ such that, for every $r\ge i$ and every $s\ge 1$, the functions
  $F\iter{r} \circ G\iter{s}$  and  $F\iter{j}$  commute.
\end{enumerate}
\end{thm}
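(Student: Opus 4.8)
The plan is to prove the cycle $(\ref{ratfun4})\Rightarrow(\ref{ratfun3})\Rightarrow(\ref{ratfun1})\Rightarrow(\ref{ratfun2})\Rightarrow(\ref{ratfun4})$. Three of the four implications are routine. For $(\ref{ratfun4})\Rightarrow(\ref{ratfun3})$ I would take $r$ to be any multiple of $j$ with $r\ge i$; then $(\ref{ratfun4})$ with $s=1$ says $F\iter{r}\circ G$ commutes with $F\iter{j}$, hence with every iterate of $F\iter{j}$, in particular with $F\iter{r}$. For $(\ref{ratfun3})\Rightarrow(\ref{ratfun1})$, the identity $F\iter{r}\circ G\circ F\iter{r}=F\iter{2r}\circ G$ is a coincidence between two different words in $F,G$ of the same content, and it is not vacuous since $\deg(F\iter{r}\circ G)\ne\deg F\iter{r}$; so the semigroup is not free. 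For $(\ref{ratfun2})\Rightarrow(\ref{ratfun4})$ and $(\ref{ratfun2})\Rightarrow(\ref{ratfun1})$: each word in $X^m$ and $\gamma X^n$ evaluates to $\gamma^{e}X^{m^an^b}$, where $a,b$ count the two letters and $e$ is a nonnegative integer read off from the word; writing $d=\operatorname{ord}(\gamma)$, a short calculation in $\Z/d\Z$ produces the $i,j$ needed for $(\ref{ratfun4})$, while, since there are more than $d$ words of any fixed sufficiently large content but only $d$ values of $\gamma^{e}$, two distinct words collide, giving $(\ref{ratfun1})$. So everything reduces to $(\ref{ratfun1})\Rightarrow(\ref{ratfun2})$.

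For that implication I would localize at the common fixed point $\infty$. As $\deg A>\deg B$ and $\deg C>\deg D$, the maps $F$ and $G$ fix $\infty$ with ramification indices $m$ and $n$, so in the local parameter $x=1/X$ they expand to power series $f,g\in x^2K[[x]]$ with $\operatorname{ord}(f)=m$ and $\operatorname{ord}(g)=n$. Since a rational function is determined by its Laurent expansion at any point, the assignment $W(F,G)\mapsto W(f,g)$ is a semigroup isomorphism $\langle F,G\rangle\xrightarrow{\sim}\langle f,g\rangle$ taking $F,G$ to $f,g$; hence $\langle f,g\rangle$ is not free, and $(\ref{ratfun2})$ for $F,G$ is equivalent to its power-series analogue. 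So it suffices to show: if $f,g\in x^2K[[x]]$ with $\operatorname{ord}(f)=m$, $\operatorname{ord}(g)=n$, $\charp(K)\nmid mn$, and $\langle f,g\rangle$ is not free, then there is a formal coordinate change $\phi\in xK[[x]]$ with $\phi'(0)\ne0$ such that $\phi^{-1}\circ f\circ\phi=x^m$ and $\phi^{-1}\circ g\circ\phi=\gamma x^n$ for some root of unity $\gamma$.

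To prove this I would first pass to the Böttcher coordinate of $f$ (it exists because $\charp(K)\nmid m$) to assume $f=x^m$ exactly, so that only the claim $g=\gamma x^n$ remains. The clean input is: if $x^{m^r}\circ g$ commutes with $x^{m^r}$ for some $r\ge1$ — that is, $g(x^{m^r})^{m^r}=g(x)^{m^{2r}}$ — then, writing $g(x)=dx^n(1+h(x))$ with $h\in xK[[x]]$, comparison of constant terms gives $d^{m^r-m^{2r}}=1$, so $d$ is a root of unity, and comparison of the remaining terms (using $\charp(K)\nmid mn$ to invert the integers that occur) forces $h=0$; thus $g=\gamma x^n$. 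Conversely, when $g=\gamma x^n$ with $\gamma$ a root of unity, such an $r$ exists by an elementary congruence argument, choosing $r$ to be a large multiple of the multiplicative order of $m$ modulo the prime-to-$m$ part of $\operatorname{ord}(\gamma)$. So the local statement reduces to the assertion: if $\langle x^m,g\rangle$ is not free, then such an $r$ exists, equivalently $g$ is a monomial times a root of unity.

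This last step is the main obstacle. I would start from a shortest nontrivial relation $W_1(x^m,g)=W_2(x^m,g)$ and cancel on the right — legitimate since a nonzero power series of positive order is not a zero-divisor under substitution — to arrange that $W_1$ and $W_2$ end in different letters, i.e.\ $U(x^m,g)\circ x^m=V(x^m,g)\circ g$ for words $U,V$. Tracking leading coefficients turns this into $d^{E}=1$ for an explicit integer $E=E(W_1)-E(W_2)$, so that if $E\ne0$ then $d$ is already a root of unity; the real work is to force all higher-order coefficients of $g$ to vanish, which I expect to do by exploiting that the left-hand side $U(x^m,g)\circ x^m$ is simultaneously a perfect $m$-th power and invariant under $x\mapsto\zeta x$ for $\zeta^m=1$, combined with an induction on $\operatorname{ord}\bigl(g-dx^n\bigr)$, and by disposing separately of the degenerate configurations (e.g.\ $U$ or $V$ empty) in which the relation already exhibits $g$ or an iterate of $g$ as a monomial. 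The ``Chebyshev'' sub-case $E=0$ — where the leading coefficients carry no information and $\gamma$ turns out to equal $1$ — is the most delicate and must be handled entirely through this higher-order bookkeeping.
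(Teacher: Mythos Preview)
Your reduction to the formal neighborhood of $\infty$ and the use of the B\"ottcher coordinate to normalize $f=x^m$ are exactly how the paper proceeds, and your treatment of the easy implications is fine (the calculation you sketch for $(\ref{ratfun2})\Rightarrow(\ref{ratfun4})$ is the content of Lemma~\ref{rtof1bis}). The detour showing directly that commutation of $x^{m^r}\circ g$ with $x^{m^r}$ forces $g$ to be a monomial with root-of-unity coefficient is correct and pleasant, but it does not help close the cycle: you still need to get from ``$\langle x^m,g\rangle$ not free'' to either that commutation statement or directly to ``$g$ is such a monomial'', and this is precisely what your last paragraph fails to do.

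The plan in that paragraph does not work as written. After right-cancellation you have $U\circ x^m=V\circ g$; this common value is indeed a power series in $x^m$ (your $\zeta$-invariance), but it is \emph{not} in general a perfect $m$-th power --- that would require the \emph{leftmost} letter to be $x^m$, which left-cancellation may well deny you. Even granting both properties, $\zeta$-invariance only tells you that the difference between the two lowest-order exponents of $U\circ x^m$ is a multiple of $m$, and that alone is too weak to contradict equality with $V\circ g$. Your ``Chebyshev sub-case'' is also a red herring here: Chebyshev polynomials play no role in the power-series statement and enter only in the finer polynomial classification of Theorem~\ref{poly}.

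The paper supplies the missing invariant. For nonzero $H\in K[[x]]$ let $\gap(H)$ be the difference between the degrees of its two lowest terms (set to $\infty$ if $H$ is a monomial). One checks that $\gap(H\circ x^m)=m\cdot\gap(H)$ and $\gap(x^m\circ H)=\gap(H)$; it then follows that for any word $B$ in $x^m,g$ one has $\gap(B\circ g)=\gap(g)$, while $\gap(A\circ x^m)$ is either $\infty$ or $m^i\cdot\gap(g)$ with $i\ge 1$. A right-cancelled relation $A\circ x^m=B\circ g$ therefore forces $\gap(g)\ge m\cdot\gap(g)$, impossible unless $\gap(g)=\infty$, i.e.\ $g$ is a monomial (Lemma~\ref{power2}). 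The root-of-unity constraint on the leading coefficient $d$ is then a \emph{separate} argument (Lemma~\ref{rtof1}): after also left-cancelling, one shows that the exponent of $d$ on the side ending in $x^m$ is strictly smaller than on the side ending in $g$, so equality is impossible when $d$ has infinite order. Your ``$d^E=1$ with $E\ne0$'' is the shadow of this, but establishing $E\ne0$ is the entire content, and it requires the bound just described rather than following from leading-coefficient bookkeeping alone.
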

\renewcommand{\theenumi}{\arabic{enumi}}
\renewcommand{\labelenumi}{(\arabic{enumi})}

Since Ritt has classified the commuting complex rational functions \cite{Ritt1923}, we know a good deal
about the shape of  $F\iter{r} \circ G$  and  $F\iter{r}$  in \eqref{ratfun3} when $K=\C$.  However, due to the
non-explicit nature of Ritt's result, it is not clear whether this information makes it possible to classify all 
$F$ and $G$ satisfying \eqref{ratfun3}
when $K=\C$.  Ritt's result does take an explicit form when $F$ and $G$ are polynomials.
By combining the polynomial case of Ritt's result with several additional ingredients, we
obtain the following classification of the polynomials $F$ and $G$ satisfying \eqref{ratfun3}.  Here if 
$L(X)\in K[X]$ has degree $1$ then we write $L\iter{-1}$ for the inverse
of $L$ under composition.

\begin{thm} \label{poly}
Let $K$ be an algebraically closed field, and let $F,G \in K[X]$ have degrees $m,n$ where $m,n>1$ and
$\charp(K)\nmid mn$.  Then the following are equivalent:
\begin{enumerate}
\item[(a)] the semigroup generated by $F$ and $G$ under composition is not the free semigroup on the letters $F$
 and $G$
\item[(b)] there is a root of unity $\gamma\in K^*$ for which the semigroup generated by $F$ and $G$ is
isomorphic to the semigroup generated by $X^m$ and $\gamma X^n$, via an isomorphism which maps
$F$ and $G$ to $X^m$ and $\gamma X^n$
\item[(c)] there is a degree-one polynomial $L(X) \in K[X]$ for which $L\iter{-1} \circ F \circ L$  and  
$L\iter{-1} \circ G \circ L$ are one of the following:
\renewcommand{\theenumi}{}
\renewcommand{\labelenumi}{}
\renewcommand{\theenumii}{\ref{poly}.\arabic{enumii}}
\renewcommand{\labelenumii}{(\thethm.\arabic{enumii})}
\begin{enumerate}
\item\label{poly1} $X^m$ and $\alpha X^n$,  where  $\alpha\in K^*$ is a root of unity
\item\label{poly2} $\alpha T_m(X)$ and $\beta T_n(X)$, where  $\alpha,\beta\in\{1,-1\}$
\item\label{poly3} $\alpha H\iter{i}(X)$ and $\beta H\iter{j}(X)$, where $i,j>0$, $H(X)=X^r C(X^s)$ 
for some $C(X)\in K[X]$ and some $s>0\le r$, and $\alpha,\beta\in K^*$
 satisfy  $\alpha^s=\beta^s=1$.
\end{enumerate}
\end{enumerate}
\end{thm}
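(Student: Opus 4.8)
The plan is to derive Theorem~\ref{poly} from Theorem~\ref{ratfun} together with the classification of commuting polynomials. A polynomial is the rational function $A/B$ with $B=1$, so Theorem~\ref{ratfun} applies: it gives (a)$\Leftrightarrow$(b), and shows that each of these is equivalent to condition~\eqref{ratfun3}, namely that $F\iter{r}\circ G$ commutes with $F\iter{r}$ for some $r>0$. Moreover each of (a), (b), (c) is unchanged when $(F,G)$ is replaced by $(L\iter{-1}\circ F\circ L,\ L\iter{-1}\circ G\circ L)$ for a degree-one $L\in K[X]$: for (a) and (b) because conjugation by $L$ is a semigroup isomorphism, and for (c) by construction. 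Hence it suffices to prove \eqref{ratfun3}$\Leftrightarrow$(c), and throughout we are free to conjugate $F$ and $G$ simultaneously by a degree-one polynomial.

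For (c)$\Rightarrow$\eqref{ratfun3} I would take the cases \eqref{poly1}--\eqref{poly3} in turn and, in each, exhibit an $r>0$ for which $F\iter{r}\circ G$ commutes with $F\iter{r}$. Since $X^m$, $T_m$, $H\iter{i}$ commute with $X^n$, $T_n$, $H\iter{j}$ respectively, the only issue is to absorb the scalar prefactors. Using $T_n(-X)=(-1)^nT_n(X)$ and $H(\zeta X)=\zeta^{\deg H}H(X)$ for $\zeta^s=1$, a direct computation expresses $F\iter{t}$ and $F\iter{t}\circ G$ as scalars of bounded finite order times the corresponding composition of the underlying maps; as those scalars range over a fixed finite cyclic group, choosing $t$ in a suitable residue class forces the prefactors of $F\iter{t}\circ G\circ F\iter{t}$ and of $F\iter{2t}\circ G$ to coincide. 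This direction is routine.

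The heart of the theorem is \eqref{ratfun3}$\Rightarrow$(c). Put $P:=F\iter{r}$ and $Q:=P\circ G$, so $P$ and $Q$ commute, $\deg P=m^r\ge 2$, and $\deg Q=m^rn>\deg P$; since $\charp(K)\nmid mn$, the degrees $m^r$ and $m^rn$ are prime to $\charp(K)$, so the classification of commuting polynomials applies to the pair $(P,Q)$. After a common degree-one conjugation we may thus assume $(P,Q)$ equals one of: (i) $(X^{m^r},\,\beta X^{m^rn})$ with $\beta$ a root of unity; (ii) $(\pm T_{m^r},\,\pm T_{m^rn})$; (iii) $(\alpha H\iter{c},\,\beta H\iter{d})$ with $H(X)=X^eC(X^s)$, $\alpha^s=\beta^s=1$, $c,d\ge1$. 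In each case one must identify the shape of the compositional $r$-th root $F$ of the given $P$, and then solve $Q=P\circ G$ for $G$. In case (i), since $X^{m^r}$ has $0$ as its only critical point and as its only root, the same is true of $F$ (the critical points and the roots of $F$ lie among the critical points of $F\iter{r}$), whence $F=\mu X^m$ with $\mu^{(m^r-1)/(m-1)}=1$, so $\mu$ is a root of unity; then $Q=P\circ G=G^{m^r}$ equals $\beta X^{m^rn}$, forcing $G=\nu X^n$ with $\nu$ a root of unity, and rescaling $F$ to $X^m$ by a further linear conjugation (which keeps $G$ a root-of-unity multiple of $X^n$) gives case~\eqref{poly1}. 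Case (ii) is analogous via the standard characterization of dilated Chebyshev polynomials in terms of critical points: $F\iter{r}=\pm T_{m^r}$ forces $F=\pm T_m$, and then $Q=P\circ G$ forces $G=\pm T_n$, giving case~\eqref{poly2}. In case (iii), the fact that $F$ commutes with $P=\alpha H\iter{c}$ (hence with the relevant twists of $H$), combined with Ritt's description of the centralizer of a polynomial of the form $X^eC(X^s)$, shows that $F$, $G$, and $H$ are all root-of-unity multiples of iterates of a single primitive polynomial $K(X)=X^{e'}C'(X^{s'})$; matching degrees and the exponents occurring in $K(\zeta X)=\zeta^{\deg K}K(X)$ then produces $F=\alpha'K\iter{i}$ and $G=\beta'K\iter{j}$ with $i,j>0$ and $(\alpha')^{s'}=(\beta')^{s'}=1$, i.e.\ case~\eqref{poly3}.

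The main obstacle is case (iii). Extracting an $r$-th compositional root of $\alpha H\iter{c}$ that is again a root-of-unity multiple of an iterate of a monomial-like polynomial is delicate, since $r$ need not divide $c$ and $H$ need not itself be a proper compositional power. The remedy is to argue inside the centralizer of $H$ in the composition monoid of polynomials: for $H$ not linearly conjugate to a power map or a Chebyshev polynomial, Ritt's analysis forces every element of that centralizer to be a scalar multiple of an iterate of one fixed primitive polynomial of shape $X^{e'}C'(X^{s'})$, after which a bookkeeping argument matches $m,n,r$ against $e',s'$ and verifies that the scalars satisfy $(\alpha')^{s'}=(\beta')^{s'}=1$. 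One must also dispatch the degenerate overlaps among the three Ritt cases (such as $C$ constant or $s=1$) and track the characteristic-$p$ subtleties, but under $\charp(K)\nmid mn$ the critical-point arguments in cases (i) and (ii) carry over without change.
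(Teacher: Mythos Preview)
Your overall strategy matches the paper's: use Theorem~\ref{ratfun} to reduce to the commuting pair $(F\iter{r},\,F\iter{r}\circ G)$, apply the Dorfer--Woracek/Ritt classification of commuting polynomials (Proposition~\ref{commute}), and analyze each of the three outputs. Your cases (i) and (ii) are essentially on the right track, though the paper handles (ii) differently: rather than a critical-point characterization of Chebyshev polynomials (which would need care in positive characteristic), it uses Ritt's equal-degree decomposition lemma (Lemma~\ref{uniq}) to compare $\pm T_{m^r}=\pm T_{m^{r-j}}\circ T_{m^j}$ with $F\iter{r}=F\iter{r-j}\circ F\iter{j}$, and then a short gap-form argument pins the linking linear down to $\pm X$ (Lemma~\ref{chebit}).

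Case (iii) has a genuine gap. You propose to work inside the centralizer of $H$, but $F$ is only known to commute with $P=\alpha H\iter{c}$, not with $H$ itself; and $H$ need not commute with $P$ either (compute $H\circ\alpha H\iter{c}=\alpha^{\,r} H\iter{c+1}$ versus $\alpha H\iter{c}\circ H=\alpha H\iter{c+1}$, which agree only when $\alpha^{\,r-1}=1$). You also never explain why $G$ should lie in any centralizer, and the phrase ``a bookkeeping argument'' is doing all the work of extracting $G$. The paper circumvents both problems. For $F$, it applies the Dorfer--Woracek \emph{common-iterate} theorem (Proposition~\ref{commoniterate}) directly to the equation $F\iter{r}=\beta H\iter{j}$: this produces a polynomial $A$ in gap form with $F=\gamma A\iter{s}$ and $H=\delta A\iter{t}$, with no need for $F$ to commute with $H$. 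For $G$, the paper substitutes these expressions into $F\iter{r}\circ G=\alpha H\iter{i}$ to get $A\iter{rs}\circ G=\epsilon A\iter{it}$, then invokes Lemma~\ref{uniq} to solve for $G$ as a degree-one polynomial composed with $A\iter{it-rs}$; Lemmas~\ref{gap}, \ref{parity}, and \ref{Iiterate} force that degree-one polynomial to be $\zeta X$ with $\zeta^{\LL(A)}=1$. Those two ingredients---the common-iterate result and the unique decomposition lemma---are exactly what your sketch is missing. (Incidentally, one can check that $G$ \emph{does} commute with $P$, since $P\iter{2}\circ G=P\circ G\circ P$ and left cancellation holds; so a centralizer-of-$P$ approach is salvageable, but it still needs a statement like Proposition~\ref{commoniterate} to avoid introducing a new, incompatible conjugation when you pass from the pair $(P,Q)$ to the pairs $(F,P)$ and $(G,P)$.)
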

\renewcommand{\theenumii}{\arabic{enumi[}}
\renewcommand{\labelenumii}{(\arabic{enum[i})}
\renewcommand{\theenumi}{\arabic{enumi}}
\renewcommand{\labelenumi}{(\arabic{enumi})}

\noindent
In case \eqref{poly2}, $T_m(X)$ denotes the ``normalized Chebyshev polynomial", which is the unique polynomial in
$K[X]$ for which 
\[
T_m\Bigl(X+\frac1X\Bigr)=X^m+\frac1{X^m}.
\]
  We give more information about $T_m(X)$ in 
Remark~\ref{chebrem}.

\begin{remark}
The semigroups in (b) can be quite complicated.  For instance, if $\gamma\in K^*$ is a primitive
fifth root of unity then the semigroup generated by $F:=X^2$ and $G:=\gamma X^3$ has the following nine relations, none
of which is generated by the others:
\begin{align*}
G\circ F\iter{2}\circ G&=F\circ G\iter{2}\circ F \\
F\circ G\circ F\circ G&=G\iter{2}\circ F\iter{2} \\
F\circ G\iter{4}&=G\iter{4}\circ F \\
F\iter{2}\circ G\iter{3}&=G\iter{2}\circ F\circ G\circ F \\
F\iter{3}\circ G\iter{2}&=G\circ F\circ G\circ F\iter{2} \\
F\iter{4}\circ G&=G\circ F\iter{4} \\
F\circ G\circ F\iter{3}\circ G&=G\circ F\iter{3}\circ G\circ F \\
F\circ G\iter{3}\circ F\circ G&=G\circ F\circ G\iter{3}\circ F \\
F\iter{2}\circ G\iter{2}\circ F\circ G&=G\iter{3}\circ F\iter{3}.
\end{align*}
\end{remark}

Our proof of Theorem~\ref{poly} relies on the known classifications of commuting polynomials and of polynomials
with a common iterate.  In case $K=\C$, the latter classification was first proved by Ritt~\cite{Ritt1920}, using a
method which inspired the proof in the present paper.  This latter classification, together with a result proved
independently by Fatou~\cite{Fatou}, Julia~\cite{Julia} and Ritt~\cite{Ritt1923}, yields a classification of
commuting polynomials in $\C[X]$.
Dorfer and Woracek~\cite{DW} generalized these results to arbitrary
fields of characteristic not dividing the degrees of the polynomials.  
We note that the proofs by Fatou, Julia and Ritt of the classification of commuting polynomials in $\C[X]$ do not 
extend easily to
positive characteristic, and in fact the proof for arbitrary fields in \cite{DW} yields a new proof in this classical case.
In addition to using two results from \cite{DW}, we also use methods adapted from that paper.

 We also solve the analogous problem for formal power series in $X^2 K[[X]]$ when $\charp(K)=0$, and give a
 partial solution when $\charp(K)>0$:

\begin{thm} \label{powerseries}
\renewcommand{\theenumi}{\ref{powerseries}.\arabic{enumi}}
\renewcommand{\labelenumi}{(\thethm.\arabic{enumi})}
Let $K$ be an algebraically closed field, and let $F,G \in K[[X]]$ have lowest-degree terms of degrees $m,n>1$,
where $\charp(K)\nmid mn$.  The following are equivalent:
\begin{enumerate}
\item\label{powerseries1} the semigroup generated by $F$ and $G$ under composition is not the free semigroup on the
 letters $F$ and $G$
\item\label{powerseries2} there is some $L \in  K[[X]]$  having lowest-degree term of degree $1$  for which
  $L\iter{-1} \circ F \circ L = X^m$  and  $L\iter{-1} \circ G \circ L = \alpha X^n$,  where  $\alpha\in K^*$  is a root
   of unity.
\end{enumerate}
\end{thm}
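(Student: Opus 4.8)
I would dispatch $(\ref{powerseries2})\Rightarrow(\ref{powerseries1})$ by a counting argument. If $L$ conjugates $(F,G)$ to $(X^m,\alpha X^n)$ with $\alpha$ a root of unity, then conjugation by $L$ carries the semigroup generated by $F$ and $G$ isomorphically onto the semigroup $S$ generated by $X^m$ and $\alpha X^n$, with $F\mapsto X^m$ and $G\mapsto\alpha X^n$, so it is enough to see that $S$ is not free on these generators. Since $(cX^p)\circ(dX^q)=cd^pX^{pq}$, every length-$\ell$ word in the letters $X^m$ and $\alpha X^n$ having exactly $a$ occurrences of the first letter evaluates to $\gamma X^{m^{a}n^{\ell-a}}$ for some $\gamma$ in the finite cyclic group $\langle\alpha\rangle$. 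Thus the $2^{\ell}$ words of length $\ell$ take at most $(\ell+1)\abs{\langle\alpha\rangle}$ distinct values, so once $2^{\ell}>(\ell+1)\abs{\langle\alpha\rangle}$ two distinct words coincide in $S$; hence $S$, and therefore the semigroup generated by $F$ and $G$, is not free.

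For $(\ref{powerseries1})\Rightarrow(\ref{powerseries2})$ the plan has three steps. First I would normalize $F$: since $K$ is algebraically closed, rescaling $X$ lets us assume $F=X^m+O(X^{m+1})$, and then the formal version of B\"ottcher's theorem applies --- because $\charp(K)\nmid m$, one solves recursively (dividing by $m$ at each stage) for a series $L=X+O(X^2)$ with $L\iter{-1}\circ F\circ L=X^m$. Conjugation by a series whose lowest-degree term has degree $1$ preserves $(\ref{powerseries1})$, the conclusion $(\ref{powerseries2})$, and the orders of the lowest-degree terms of $F$ and $G$, so I may assume $F=X^m$ and write $G=g_nX^n+O(X^{n+1})$ with $g_n\ne 0$.

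Second, I would reduce to a commuting relation by running, in the ring $K[[X]]$, the same Ritt-style manipulation of the given nontrivial relation that yields $(\ref{ratfun3})$ from $(\ref{ratfun1})$ in the proof of Theorem~\ref{ratfun}. That manipulation uses only the local expansions of the compositions involved, the right-cancellation law $A\circ C=B\circ C\Rightarrow A=B$ (valid whenever $C$ has positive order), and the fact that $u^{k}=v^{k}$ with $\charp(K)\nmid k$ forces $u/v$ to be a constant root of unity; all three hold in $K[[X]]$. So the argument should produce an integer $r>0$ for which $F\iter{r}\circ G$ commutes with $F\iter{r}=X^{N}$, where $N:=m^{r}$. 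I expect this transfer to be the main obstacle: one must verify that the Ritt reduction never appeals to a genuinely global feature of rational maps --- degrees of branched covers, Riemann--Hurwitz, and the like --- that is not available for a bare power series.

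Third, I would exploit the triviality of the centralizer of a monomial. Suppose $H=\sum_{i\ge k}h_iX^{i}\in K[[X]]$ with $h_k\ne 0$ and $k\ge 1$ satisfies $H(X^{N})=H(X)^{N}$. Comparing coefficients of $X^{Nk}$ gives $h_k^{N-1}=1$. If some $h_{k+j}$ with $j\ge 1$ were nonzero, take $j$ minimal; then the coefficient of $X^{Nk+j}$ on the left equals $h_{k+j/N}$ when $N\mid j$ and $0$ otherwise --- hence $0$ in either case, by minimality of $j$ --- while on the right it equals $Nh_k^{N-1}h_{k+j}=Nh_{k+j}\ne 0$ because $\charp(K)\nmid N$, a contradiction. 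So $H=\zeta X^{k}$ with $\zeta^{N-1}=1$. Applying this to $H:=F\iter{r}\circ G=G^{N}$, whose lowest-degree term has degree $Nn$, gives $G^{N}=\zeta X^{Nn}$ with $\zeta^{N-1}=1$; passing to $K((X))$, $(G/X^{n})^{N}=\zeta$, and since $\charp(K)\nmid N$ every $N$-th root of a nonzero constant in $K((X))$ is a constant, so $G=\alpha X^{n}$ with $\alpha^{N}=\zeta$. Then $\alpha^{N(N-1)}=\zeta^{N-1}=1$, so $\alpha$ is a root of unity, and with $F=X^m$ this is exactly $(\ref{powerseries2})$, with $L$ the composite of the substitutions used in the first step.
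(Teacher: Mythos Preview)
Your counting argument for \eqref{powerseries2}$\Rightarrow$\eqref{powerseries1} is correct and slightly different from the paper's route: the paper exhibits specific relations via Lemma~\ref{rtof1bis}, whereas you use pigeonhole on the finitely many possible values of words of a given length. Both work.

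The genuine gap is Step~2 of your argument for \eqref{powerseries1}$\Rightarrow$\eqref{powerseries2}. You propose to import ``the Ritt-style manipulation of the given nontrivial relation that yields \eqref{ratfun3} from \eqref{ratfun1} in the proof of Theorem~\ref{ratfun}.'' But there is no such independent manipulation in the paper. The implication \eqref{ratfun1}$\Rightarrow$\eqref{ratfun3} is obtained there by first proving \eqref{ratfun1}$\Rightarrow$\eqref{ratfun2}, and that step is carried out precisely by passing to power series and invoking Theorem~\ref{powerseries} itself; one then gets \eqref{ratfun3} via \eqref{ratfun4} from Lemma~\ref{rtof1bis}. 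So your Step~2, as written, is circular: the commutation relation $F\iter{r}\circ G\circ F\iter{r}=F\iter{2r}\circ G$ is a \emph{consequence} of Theorem~\ref{powerseries}, not an input to it.

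The paper's actual route after normalizing to $F=X^m$ is direct and never passes through a commutation relation. Lemma~\ref{power2} compares, on the two sides of a would-be relation $A\circ F=B\circ G$, the difference between the degrees of the two lowest terms, and shows this forces $G$ to be a monomial $\alpha X^n$. Lemma~\ref{rtof1} then tracks the exponent of $\alpha$ in the leading coefficient on each side and shows that $\alpha$ must have finite order. Your Step~3 centralizer computation is correct and is close in spirit to the gap argument of Lemma~\ref{power2}, but you cannot reach the hypothesis of Step~3 without first supplying an argument that extracts, from an arbitrary nontrivial relation, either a commutation with $X^N$ or the monomial conclusion directly; that is exactly what Lemmas~\ref{power2} and~\ref{rtof1} provide.
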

\renewcommand{\theenumi}{\arabic{enumi}}
\renewcommand{\labelenumi}{(\arabic{enumi})}

The arguments in this paper are elementary, given the known results that we use,
but we emphasize that the arguments are very subtle and need to be done in just the right way in order to work.  In particular, these arguments are quite difficult to come up with.

After we sent a preliminary version of this paper to two colleagues, our results have been used in two recent papers.  Bell, Bruin, Huang, Peng and Tucker use our results to prove a particularly strong form of the Tits alternative for polynomials \cite{JT}.  Also Hindes has extended our proof of Lemma~\ref{rtof1} to yield an analogous result for finite sets of polynomials \cite[Thm.~4.2]{H}.  The combination of this initial response and the many important applications of the earlier results of Fatou, Julia and Ritt makes us optimistic that our results will find many further applications.


\section{B\"ottcher functions}

In this section we show that certain elements of $K[[X]]$ are conjugate (under composition) to $X^m$.
This will play a crucial role in the proofs of our main results.

\begin{lemma} \label{Boettcherlemma}
Let $K$ be a field, and let $F(X) \in K[[X]]$ have lowest-degree term  $\alpha_m X^m$,  where
$\alpha_m\in K^*$, $\charp(K)\nmid m$, and $m>1$.
Then the following are equivalent:
\renewcommand{\theenumi}{\ref{Boettcherlemma}.\arabic{enumi}}
\renewcommand{\labelenumi}{(\thethm.\arabic{enumi})}
\begin{enumerate}
\item\label{Boett1}  there exists $L \in K[[X]]$ with lowest-degree term of degree $1$ such that  
$L\iter{-1} \circ F \circ L = X^m$
\item\label{Boett2} $\alpha_m$  is an $(m-1)$-th power in  $K$.
\end{enumerate}
Moreover, if $L_0$ is a series as in \eqref{Boett1} then all series $L$ as in \eqref{Boett1}
 have the form $L_0\circ \gamma X$
where $\gamma\in K$ satisfies $\gamma^{m-1}=1$.
\end{lemma}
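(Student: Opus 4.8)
The plan is to build a conjugating series $L$ by solving the equation $F\circ L=L\circ X^m$ (which is equivalent to \eqref{Boett1}) coefficient by coefficient, showing that the recursion can be started precisely when $\alpha_m$ is an $(m-1)$-th power and that every subsequent coefficient is then forced. The implication \eqref{Boett1}$\Rightarrow$\eqref{Boett2} is immediate: writing $L=c_1X+c_2X^2+\cdots$ with $c_1\in K^*$, comparison of the coefficients of $X^m$ on the two sides of $F(L(X))=L(X^m)$ gives $\alpha_m c_1^m=c_1$, so $\alpha_m=(c_1^{-1})^{m-1}$.

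For \eqref{Boett2}$\Rightarrow$\eqref{Boett1}, I would write $\alpha_m=\beta^{m-1}$ with $\beta\in K^*$, put $c_1:=\beta^{-1}$ so that the degree-$m$ equation $\alpha_m c_1^{m-1}=1$ holds, and then solve the degree-$k$ part of $F(L(X))=L(X^m)$ for $c_{k-m+1}$, one $k>m$ at a time. The key point is that in $F(L(X))=\sum_{j\ge m}\alpha_j L(X)^j$ the coefficient of $X^k$ has the form $m\alpha_m c_1^{m-1}\,c_{k-m+1}+P_k$, with $P_k$ depending only on $c_1,\dots,c_{k-m}$ and the coefficients of $F$, because $c_{k-m+1}$ can enter only through the $j=m$ summand $\alpha_m L(X)^m$, and there only linearly, with coefficient $mc_1^{m-1}$. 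Meanwhile the coefficient of $X^k$ in $L(X^m)$ is $c_{k/m}$ when $m\mid k$ and $0$ otherwise, and when $m\mid k$ the inequality $k/m<k-m+1$ (which holds because $(m-1)(k-m)>0$) shows this coefficient was already determined at an earlier stage. Since $m\alpha_m c_1^{m-1}=m\ne 0$ in $K$ by the hypothesis $\charp(K)\nmid m$, the degree-$k$ equation has a unique solution for $c_{k-m+1}$, and iterating produces $L\in K[[X]]$ with lowest-degree term $c_1X$. The delicate part of the whole argument is precisely this bookkeeping: one must be sure that the new unknown $c_{k-m+1}$ genuinely appears, appears only linearly, and appears with the invertible coefficient $m\alpha_m c_1^{m-1}$, while everything else in the degree-$k$ equation — in particular the relevant coefficient of $L(X^m)$ — has already been pinned down.

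For the ``moreover'' statement, given two solutions $L_0,L$, I would set $M:=L_0\iter{-1}\circ L$ (which has lowest-degree term of degree $1$) and use $F\circ L_0=L_0\circ X^m$ to rewrite $F\circ L=L\circ X^m$ as $L_0\circ X^m\circ M=L_0\circ M\circ X^m$; cancelling the invertible $L_0$ leaves $M(X)^m=M(X^m)$. This is the case $F=X^m$, $\alpha_m=1$ of the equation treated above, so the same recursion applies: writing $M=\gamma X+\cdots$ with $\gamma\in K^*$, the degree-$m$ comparison forces $\gamma^{m-1}=1$, and then, since $M=\gamma X$ is itself a solution and the recursion has a unique solution once the leading coefficient is fixed, $M=\gamma X$. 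Hence $L=L_0\circ\gamma X$ with $\gamma^{m-1}=1$. Conversely every such series satisfies \eqref{Boett1}, since $(\gamma^{-1}X)\circ X^m\circ(\gamma X)=\gamma^{m-1}X^m=X^m$.
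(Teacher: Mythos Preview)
Your proof is correct and follows essentially the same approach as the paper: rewrite \eqref{Boett1} as $F\circ L=L\circ X^m$, read off $\alpha_m c_1^{m-1}=1$ from the $X^m$-coefficient, and then solve recursively using that the new unknown $c_{k-m+1}$ appears in the degree-$k$ equation with the invertible coefficient $m\alpha_m c_1^{m-1}$. For the ``moreover'' part, the paper verifies directly that $L_0(\gamma X)$ is the unique solution with leading coefficient $\beta_1\gamma$, whereas you pass through $M:=L_0\iter{-1}\circ L$ and reduce to the special case $F=X^m$; this is a cosmetic difference, since both arguments rest on the same uniqueness statement you established in the recursion.
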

\renewcommand{\theenumi}{\arabic{enumi}}
\renewcommand{\labelenumi}{(\arabic{enumi})}

\begin{proof}
Write  $F(X) = \sum_{i=m}^{\infty} \alpha_i X^i$ with $\alpha_i\in K$.
Consider an arbitrary  $L(X) = \sum_{i=1}^{\infty} \beta_j X^j$ with $\beta_1\ne 0$, where the $\beta_j$ are
elements of $K$ which we will determine.  The equation
\begin{equation} \label{B1}
L\iter{-1} \circ F \circ L = X^m
\end{equation}
is equivalent to
\begin{equation} \label{B2}
F \circ L = L \circ X^m.
\end{equation}
Both sides of (\ref{B2}) have lowest-degree term of degree $m$.  Equating coefficients of $X^m$ gives
$\alpha_m \beta_1^m = \beta_1$,
so that  $\beta_1^{m-1} = 1/\alpha_m$, whence \eqref{Boett1} implies \eqref{Boett2}.
For any  $\beta_1\in K$  satisfying $\beta_1^{m-1} = 1/\alpha_m$,  we successively compare the coefficients of
$X^{m+r}$ in both sides of (\ref{B2}) for $r=1,2,3,\dots$.
The coefficient of $X^{m+r}$ in  $F\circ L$  is the sum of
$\alpha_m m \beta_1^{m-1} \beta_{r+1}$ and some function of $F$ and the $\beta_j$'s with $j\le r$,
while the coefficient of $X^{m+r}$ in  $L \circ X^m$  is a function of $F$ and the $\beta_j$'s with $j\le r$.
Thus, for any choice of $\beta_j$'s with $2\le j\le r$, there is a unique choice of $\beta_{r+1}$ for which the two
sides of (\ref{B2}) have the same coefficient of $X^{m+r}$.  Inductively, it follows that there is a
unique $L\in XK[[X]]$ satisfying (\ref{B2}) for which $L'(0)$ is a fixed $(m-1)$-th root of $1/\alpha_m$.
Fix one such $L_0(X)$ and put $\beta_1:=L_0'(0)$, and note that by composing (\ref{B2}) on the right with $\gamma X$ 
where $\gamma^{m-1}=1$ we obtain $F\circ L_0(\gamma X) =L_0(\gamma^m X^m)= L_0(\gamma X) \circ X^m$,  so that  $L_0(\gamma X)$ 
 is the unique solution of (\ref{B2}) having lowest-degree term  $\beta_1 \gamma X$.  Therefore these solutions 
$L_0(\gamma X)$  comprise all solutions of (\ref{B1}).
\end{proof}

\begin{remark}
In case $K=\C$, the series $L\iter{-1}(X)$ satisfying \eqref{Boett1} are called 
\emph{B\"ottcher functions}, since B\"ottcher proved their existence in 1904~\cite[p.~176]{Boettcher}.  
In this case  $L(X)$  can be taken to be the unique analytic function defined in an open
neighborhood of $0$ such that the sequence of functions  $(F\iter{i}(X))^{1/m^i}$  converges uniformly (pointwise)
to  $L\iter{-1}(X)$,  where for each function we choose the  $m^i$-th root of  $F\iter{i}(X)$  which has derivative 
at $0$ equal to some fixed $(m-1)$-th root of $\alpha_m$.  
These B\"ottcher functions play a prominent role in complex dynamics, in connection with the dynamics near a
superattracting fixed point; for instance, see \cite[Chap.~9]{Milnor}.
\end{remark}

\begin{remark}
The hypotheses in Lemma~\ref{Boettcherlemma} cannot be removed.  For instance, if $\charp(K)=p>0$
then the equation  $L\iter{-1} \circ (X^p+X^{p+1}) \circ L = X^p$  has no solutions in $L\in X K^*+X^2 K[[X]]$,
since equating terms of degree $p+1$ in  $(X^p+X^{p+1}) \circ L = L \circ X^p$  gives  $L'(0)^{p+1} = 0$.
\end{remark}

\section{Semigroups of formal power series}

In this section we prove Theorems~\ref{ratfun} and \ref{powerseries}.  By Lemma~\ref{Boettcherlemma},
it suffices to prove Theorem~\ref{powerseries} in case $F(X)=X^m$.  We first show that in this case 
$F(X)$ and $G(X)$ generate a free semigroup if $G(X)$ is not a monomial.

\begin{lemma} \label{power2}
Let $K$ be a field, let $m,n>1$ be integers not divisible by $\charp(K)$, and let $G\in K[[X]]$ have
lowest-degree term of degree $n$.  If  $G$  has at least two terms then the semigroup generated by
$X^m$ and $G$ under composition is the free semigroup on the letters $X^m$ and $G$.
\end{lemma}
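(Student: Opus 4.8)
The plan is to argue by contradiction: suppose the semigroup generated by $X^m$ and $G$ is not free, so there are two distinct words $W_1$ and $W_2$ in the letters $X^m$ and $G$ that represent the same power series. Among all such coincidences, choose one minimizing the total length $\ell$ of $W_1$ plus the length of $W_2$. After cancelling any common suffix (composition on the right is cancellable because all our series have nonzero linear-or-higher lowest term and positive composition-length; more precisely, if $U\circ P=V\circ P$ with $P$ having lowest-degree term of positive degree, then $U=V$), the two words end in different letters, say $W_1=W_1'\circ X^m$ and $W_2=W_2'\circ G$. Since $G$ has at least two terms and $m,n>1$, the key point I want to extract is a divisibility obstruction coming from the exponents appearing in $G$.

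The main tool is to track, for a power series $H$ with lowest-degree term of degree $d$, the greatest common divisor $g(H)$ of the set $\{i : \text{the coefficient of } X^i \text{ in } H \text{ is nonzero}\}$. For a monomial $c X^d$ this gcd is $d$; the hypothesis that $G$ has at least two terms says $g(G) < n$ is a proper divisor of... no — more carefully, $g(G)$ divides $n$ and the set of exponents of $G$ is $n\Z_{\ge 0}$ only if $G$ is "radical-like"; what is actually true and useful is: if $G$ has a term of degree $k$ with $n \nmid k$, fix the smallest such $k$. Then I would compute, for any word $W$ in $X^m$ and $G$, the smallest exponent $e(W)$ appearing in $W$ that is \emph{not} divisible by $\deg W$ — wait, this needs care. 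The cleaner invariant: let $\nu(H)$ be the degree of the lowest-degree term of $H$ whose exponent is not a multiple of $\nu(\text{lowest term of }H)$... I will instead follow the standard approach: write $G = X^n U(X)$ where $U(0)\ne 0$ and $U$ is not... no, $G=cX^n + (\text{higher})$, so $G = c X^n(1 + (c_{k}/c) X^{k-n} + \cdots)$ where $k$ is the smallest exponent in $G$ exceeding $n$. Set $t := k - n \ge 1$. The honest plan: show that for any word $W$ of composition-length $\ell$ ending in the letter $X^m$, the series $W$ is a power series in $X^{m}$ composed appropriately so that its "first deviation exponent" is $\equiv 0 \pmod m$ relative to its degree, whereas a word ending in $G$ has first deviation exponent encoding $t$ times a product of $m$'s and $n$'s in a way that forces $W_1\circ X^m = W_2\circ G$ to be impossible — because the degree of the lowest deviating term differs.

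Concretely, the step I expect to do the real work is a lemma: if $H\in K[[X]]$ has lowest-degree term $X^d$ (we may normalize the leading coefficient to $1$ by conjugating, or just carry it) and $H$ is not a monomial, let $\delta(H)$ be the smallest $i$ with $i$ not a multiple of $d$ and $[X^i]H \ne 0$; then for $P=X^m$ we have $\delta(H\circ X^m) = m\cdot\delta(H)$ and $d(H\circ X^m)=md$, so $\delta(H\circ X^m)/d(H\circ X^m)$ is unchanged, i.e. $\delta(H\circ X^m) - d(H\circ X^m)\cdot\lfloor \delta(H)/d \rfloor$ scales by $m$; while $\delta(X^m\circ H)$ — i.e. $H^m$ — equals $\delta(H)$ again by the multinomial expansion since the lowest deviating term of $H^m$ comes from replacing one factor's $X^d$ by $X^{\delta(H)}$. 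And $\delta(H\circ G)$: here $H\circ G = H(G)$ and $G = X^n + cX^k+\cdots$ contributes a deviation at the very first opportunity, giving $\delta(H\circ G)$ a value $\equiv$ something mod $\deg(H\circ G)$ that records $t=k-n$, an obstruction not present on the $X^m$ side. Running the cancellation argument with this invariant through the minimal counterexample $W_1\circ X^m = W_2\circ G$ should yield $t \equiv 0$ in a context where it cannot be, the contradiction. The main obstacle will be setting up $\delta$ (or the right variant of it) so that it behaves cleanly under composition on \emph{both} sides simultaneously and survives the right-cancellation reduction; handling the interaction with the leading coefficients (which need not be roots of unity here, unlike in the monomial case) and making sure the base case of the induction — shortest non-free relation — is genuinely contradicted rather than merely pushed down, is where I expect to spend the most effort.
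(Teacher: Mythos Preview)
Your overall plan matches the paper's: right-cancel to a relation $A\circ X^m = B\circ G$ with distinct rightmost letters, then find a numerical invariant of power series that transforms differently under $\,\cdot\,\circ X^m$ and $\,\cdot\,\circ G$. The difficulty is the invariant you settle on. Your $\delta(H)$ --- the least exponent of $H$ not divisible by the degree $d$ of its lowest term --- may simply fail to exist: if $G=X^n+X^{2n}$ then every exponent of $G$ is a multiple of $n$, so $\delta(G)$ is undefined, yet $G$ has two terms and the lemma applies. Even when $\delta$ is defined, its behaviour under left composition (where the lowest degree changes) is murkier than your sketch suggests, and you would be tracking divisibility by a moving modulus.

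The fix is to use the quantity you already wrote down, $t=k-n$, but promoted to a general definition: for nonzero $H\in K[[X]]$ let $\gap(H)$ be the difference between the degrees of the two lowest-degree terms of $H$, with $\gap(H)=\infty$ if $H$ is a monomial. Writing $F=X^m$ and letting $H$ have lowest-degree term of degree $r\ge 2$, one checks directly that
\[
\gap(H\circ F)=m\cdot\gap(H),\qquad \gap(F\circ H)=\gap(H),\qquad \gap(G\circ H)\ge\min\bigl(\gap(H),\,r\cdot\gap(G)\bigr),
\]
with equality in the last item when $\gap(H)\ne r\cdot\gap(G)$; the hypotheses $\charp(K)\nmid m$ and $\charp(K)\nmid n$ are used in the second and third items to keep the relevant binomial coefficients nonzero. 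An immediate induction then gives $\gap(B\circ G)=\gap(G)$ for every word $B$, whereas $\gap(A\circ F)$ is either $\infty$ (if $A$ contains no $G$) or $m^i\cdot\gap(G)$ with $i\ge 1$ the number of trailing $F$'s. Thus $A\circ F=B\circ G$ would force $\gap(G)\ge m\cdot\gap(G)$, impossible since $\gap(G)$ is finite. No minimality bookkeeping, no tracking of leading coefficients, and no appeal to divisibility by the lowest degree is needed.
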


\begin{proof}
For any nonzero $H \in K[[X]]$, let $\gap(H)$ be the (positive)
difference between the degrees of the two terms in $H$ of least degree, defining $\gap(H)$ to
be $\infty$ if $H$ has only one term.
If the lowest-degree term of $H$ has degree $r\ge 2$, and we denote $F:=X^m$, then plainly
\begin{itemize}
\item $\gap(H \circ F) = m\cdot\gap(H)$ 
\item $\gap(F \circ H) = \gap(H)$ 
\item $\gap(G \circ H) \ge  \min(\gap(H),\,r\cdot\gap(G))$, with equality holding if $\gap(H)\ne r\cdot\gap(G)$.
\end{itemize}
Thus if $B$ is a composition of copies of $F$ and $G$ then
$\gap(B \circ G) = \gap(G)$.
But if $A$ is a composition of copies of $F$ and $G$ then
$\gap(A \circ F)$  equals $\infty$ if there are no $G$'s in $A$, and otherwise equals
$m^i\cdot\gap(G)$  where  $i>0$ is the number of $F$'s in $A \circ F$ which occur to the right of every $G$.
(To see this, note that the lowest-degree term of  $G \circ F\iter{i}$  has degree  $n m^i$.)

If $F$ and $G$ do not generate the free semigroup on the letters $F$ and $G$ then there is a relation
$A \circ F = B \circ G$  where both $A$ and $B$ are compositions of copies of $F$ and $G$  (note that we may
assume that the rightmost letters on the two sides in the relation are distinct, since
$U \circ H = V \circ H$  implies  $U=V$  for any nonconstant  $H  \in  X K[[X]]$).  Thus
\[
\gap(G) = \gap(B \circ G) = \gap(A \circ F) \ge  m\cdot\gap(G), 
\]
which is impossible.
\end{proof}

Next we show that the semigroup generated by  $X^m$  and  $\alpha X^n$  is free if $\alpha$ is not a root of unity.

\begin{lemma} \label{rtof1}
Let  $K$  be a field, let $m,n>1$ be integers, and let $\alpha\in K^*$ have infinite order.
Then the semigroup generated by $F:=X^m$ and $G:=\alpha X^n$ under composition is the free semigroup on
$F$ and $G$.
\end{lemma}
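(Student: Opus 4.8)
The plan is to use the fact that every composition of copies of $F=X^m$ and $G=\alpha X^n$ is again a monomial, and to encode such a monomial by a pair of integers that transforms under composition exactly like an affine map of $\R$. Write the composition coming from a word $w$ in the letters $F,G$ as $\alpha^{E(w)}X^{D(w)}$, with $E(w)\in\Z_{\ge 0}$ and $D(w)\in\Z_{\ge 1}$. A direct computation shows $D(w_1\circ w_2)=D(w_1)D(w_2)$ and $E(w_1\circ w_2)=E(w_1)+D(w_1)E(w_2)$, which is precisely the composition law for the affine maps $x\mapsto D(w)x+E(w)$ of $\R$; under this correspondence $F$ goes to $x\mapsto mx$ and $G$ goes to $x\mapsto nx+1$. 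Since $\alpha$ has infinite order, two words give the same function precisely when they give the same pair $(E,D)$, equivalently the same affine map. So it suffices to prove that $x\mapsto mx$ and $x\mapsto nx+1$ generate a free semigroup under composition.

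These are invertible affine maps, so via word reversal (which sends a composition of such maps to the reversed composition of their inverses, and is a bijection of the free semigroup) the semigroup generated by $x\mapsto mx$ and $x\mapsto nx+1$ is free if and only if the one generated by their inverses $a(x)=x/m$ and $b(x)=(x-1)/n$ is free. For the latter I will apply the standard iterated-function-system criterion: it is enough to produce a nondegenerate compact interval $I\subseteq\R$ with $a(I)\subseteq I$, $b(I)\subseteq I$, and $a(I)\cap b(I)$ having empty interior. I claim $I=[-\frac1{n-1},0]$ works --- its endpoints are the fixed points of $b$ and of $a$. Indeed $a(I)=[-\frac1{m(n-1)},0]$ and $b(I)=[-\frac1{n-1},-\frac1n]$ are both contained in $I$, and their interiors are disjoint exactly because $(m-1)(n-1)\ge1$, which holds since $m,n\ge2$.

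Granting such an $I$, I will show by induction on the length of the shorter of two distinct words $u,v$ in $a,b$ that either one of $u,v$ is a prefix of the other, or the intervals $u(I)$ and $v(I)$ have disjoint interiors. If $u,v$ begin with different letters this is immediate from the defining property of $I$; if they begin with the same letter $c$, delete it and apply the inductive hypothesis, using that $c$ acts as a homeomorphism of $\R$ and hence preserves interiors and disjointness. In the first alternative $u(I)$ and $v(I)$ are strictly nested (a nonempty word strictly shrinks $I$, since $a$ and $b$ are contractions with $a(I),b(I)\subsetneq I$), so the corresponding maps differ; in the second alternative $u(I)\ne v(I)$, so again the maps differ. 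Hence distinct words give distinct maps, which is the required freeness.

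The key move is the reduction to affine maps; this is where the argument must be set up just right. A more naive attack --- tracking the degree $m^{a}n^{b}$ and the exponent of $\alpha$ directly and peeling one letter at a time off an end of a hypothetical shortest relation --- stalls when $m<n$, because the local test for what the outermost letter was becomes ambiguous and the induction does not close. The affine/interval viewpoint handles all cases uniformly, the essential point being that the separation condition $(m-1)(n-1)\ge1$ is symmetric in $m$ and $n$. A small but real subtlety to respect is that when $m=n=2$ the sets $a(I)$ and $b(I)$ share an endpoint, so one must ask only for disjoint interiors, not disjoint closures.
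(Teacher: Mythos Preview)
Your proof is correct and takes a genuinely different route from the paper's.  The paper argues directly with the exponent $E(w)$ of $\alpha$: assuming a shortest relation with distinct leftmost and distinct rightmost letters, it first deduces $\gcd(m,n)=1$ from a congruence on $E$ modulo $m$, then (since the two sides have equal degree) equates the number of $F$'s and of $G$'s on each side, and finally obtains a contradiction by showing that the maximum possible value of $E$ on a word ending in $F$ is strictly less than the minimum possible value on a word ending in $G$ with the same letter counts.  Your argument instead packages the pair $(D,E)$ as an affine map, passes to inverses to get contractions, and runs a ping-pong/IFS open-set-condition argument on the interval $[-\tfrac{1}{n-1},0]$.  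What your approach buys is a clean, symmetric treatment of $m$ and $n$ (no need to first isolate the case $\gcd(m,n)>1$), and a viewpoint that immediately suggests generalizations to other pairs of monomials or affine maps; the paper's approach buys a short, purely algebraic proof with no appeal to real intervals, which fits the ambient setting a bit more naturally and, as the paper notes, was later extended by Hindes to finitely many generators.  One small stylistic point: your induction implicitly uses the empty word as the base of the ``prefix'' alternative, which is harmless but worth saying explicitly since the statement concerns a semigroup rather than a monoid.
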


\begin{proof}  Suppose otherwise.
If $U,V\in K[X]$ are compositions of copies of $F$ and $G$, and $H\in\{F,G\}$, then we must have $U=V$ if either $H\circ U=H\circ V$ or $U\circ H=V\circ H$.  Thus there exist $A,B\in K[X]$ which are compositions of copies of $F$ and $G$, and such that $A\circ F=B\circ G$ and the leftmost factor (namely $F$ or $G$) in $A\circ F$ is distinct from the leftmost factor in $B\circ G$.  When speaking of a monomial with leading coefficient $\alpha^s$, we refer to $s$ as the degree
of $\alpha$ in the monomial.  We can write $\{A \circ F,\, B \circ G\} = \{F \circ U,\, G \circ V\}$ where $U,V$ are 
compositions of copies of $F$ and $G$.  Then the degree of $\alpha$ in $F \circ U$ is divisible by $m$, but the 
degree of $\alpha$ in $G \circ V$ is congruent to $(1+n+n^2+\dots+n^{r-1}) \pmod{m}$,  where  $r\ge 1$ is the 
number of initial copies of $G$ in $G \circ V$ (counting from the left).  Hence $\gcd(m,n)=1$, 
so by equating
degrees in the identity $A\circ F=B\circ G$ we see that $A \circ F$ and
$B \circ G$ involve the same number (say $u$) of $F$'s as one another, and also the same number (say $v$) of 
$G$'s.  The degree of $\alpha$ in  $A \circ F$ is at most the degree of $\alpha$ in  
$F\iter{u-1} \circ G\iter{v} \circ F$, which is
\begin{align*}
\sum_{i=0}^{v-1} m^{u-1} n^i =& m^{u-1} n^{v-1} + \sum_{i=0}^{v-2} m^{u-1} n^i \\
&= m^{u-1} n^{v-1} + m^{u-1} \frac{n^{v-1}-1}{n-1}  \\
&< m^{u-1} n^{v-1} + m^{u-1} n^{v-1} \\
&\le m^u n^{v-1},
\end{align*}
while the degree of $\alpha$ in $B \circ G$ is at least $m^u n^{v-1}$,  so these degrees cannot be the same.
\end{proof}

If $\alpha$ is an $\ell$-th root of unity then the semigroup generated by $X^m$ and $\alpha X^n$ is not free, since
every composition of copies of these polynomials is a monomial with leading coefficient a power of $\alpha$,
but for suitable degrees $d$ there are more than $\ell$ ways to compose copies of $X^m$ and $\alpha X^n$
in order to obtain a degree-$d$ polynomial.  We now describe some specific relations in this case.

\begin{lemma} \label{rtof1bis}
Let $K$ be a field, and define $F,G\in K[X]$ by $F:=X^m$ and $G:=\alpha X^n$ where $m,n>1$ and 
$\alpha\in K^*$ has finite order.  Then there exist $i\ge 0$ and $j>0$ such that, for every $r\ge i$ and every 
$s\ge 1$, the  polynomials $F\iter{r}\circ G\iter{s}$ and $F\iter{j}$ commute under composition.  In particular, 
there exists $r>0$ for which  $F\iter{r} \circ G$  and  $F\iter{r}$  commute.
\end{lemma}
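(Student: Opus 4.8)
The plan is to turn the whole statement into a single explicit divisibility about the order $\ell$ of $\alpha$, and then verify that divisibility. First I would record closed forms for the relevant compositions of monomials: $F\iter{r}=X^{m^r}$, and an easy induction gives $G\iter{s}=\alpha^{(n^s-1)/(n-1)}X^{n^s}$, so that
\[
F\iter{r}\circ G\iter{s}=\alpha^{m^r(n^s-1)/(n-1)}X^{m^r n^s},
\]
which I abbreviate as $cX^d$. For monomials one checks directly that $cX^d$ and $X^e$ commute under composition exactly when $c^{e-1}=1$ (the two compositions are $cX^{de}$ and $c^eX^{de}$). Hence $F\iter{r}\circ G\iter{s}$ commutes with $F\iter{j}=X^{m^j}$ if and only if
\[
\ell \bigm| m^r\cdot\frac{n^s-1}{n-1}\cdot(m^j-1).
\]
Since $(n^s-1)/(n-1)=1+n+\dots+n^{s-1}$ is a positive integer, it suffices to produce $i\ge 0$ and $j>0$ with $\ell\mid m^r(m^j-1)$ for every $r\ge i$; the remaining factor only helps.

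Second, I would split $\ell$ relative to $m$: write $\ell=ab$ where $a$ is the largest divisor of $\ell$ all of whose prime factors divide $m$, and $b=\ell/a$; then $\gcd(a,b)=1$ and $\gcd(b,m)=1$. Taking $i:=\max_{p\mid m}v_p(\ell)$ (with the convention that this maximum is $0$ if no prime of $\ell$ divides $m$) forces $a\mid m^r$ for all $r\ge i$, because $v_p(m)\ge 1$ whenever $p\mid m$. Since $m$ is a unit modulo $b$, there is $j>0$ with $m^j\equiv 1\pmod b$ — for instance the multiplicative order of $m$ modulo $b$, or $j=1$ when $b=1$ — so $b\mid m^j-1$. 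As $a$ and $b$ are coprime and divide $m^r$ and $m^j-1$ respectively, $\ell=ab\mid m^r(m^j-1)$ for every $r\ge i$, which is exactly the required divisibility.

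For the final ``in particular'' assertion I would reuse the same $i$ and $j$: pick any positive multiple $r$ of $j$ with $r\ge i$. Then $j\mid r$ gives $m^j-1\mid m^r-1$, so $\ell\mid m^r(m^j-1)$ yields $\ell\mid m^r(m^r-1)$; by the monomial criterion above this says precisely that $F\iter{r}\circ G=\alpha^{m^r}X^{nm^r}$ commutes with $F\iter{r}=X^{m^r}$.

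I do not expect a genuine obstacle here; the argument is just the elementary arithmetic of iterating monomials together with one divisibility bookkeeping step. The only place that needs care is that bookkeeping — choosing the factorization $\ell=ab$ with the right coprimality so that ``$a\mid m^r$'' and ``$b\mid m^j-1$'' combine into ``$\ell\mid m^r(m^j-1)$'', and checking that one may pass to a multiple of $j$ in the last part.
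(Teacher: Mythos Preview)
Your proof is correct and follows essentially the same approach as the paper: reduce commutation of these monomials to the divisibility $\ell\mid m^r(m^j-1)$, then choose $i,j$ accordingly and handle the ``in particular'' by taking $r$ a sufficiently large multiple of $j$. The only difference is cosmetic: the paper simply asserts the existence of $i\ge 0$ and $j>0$ with $\ell\mid m^i(m^j-1)$ and compares the two leading coefficients $\alpha^{m^r(1+n+\dots+n^{s-1})}$ and $\alpha^{m^{j+r}(1+n+\dots+n^{s-1})}$ directly, whereas you make the existence explicit via the factorization $\ell=ab$ and phrase the criterion as $c^{e-1}=1$.
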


\begin{proof}
Let $\ell$ be the multiplicative order of $\alpha$, and let $i\ge 0$ and $j>0$ be integers for which 
$\ell\mid m^i(m^j-1)$.
For any integers $r\ge i$ and $s\ge 1$, the polynomials
$F\iter{r} \circ G\iter{s}\circ F\iter{j}$  and  $F\iter{j+r} \circ G\iter{s}$
are monomials of the same degree with leading coefficients 
$\alpha^{m^r(1+n+n^2+\ldots+n^{s-1})}$  and  $\alpha^{m^{j+r}(1+n+n^2+\ldots+n^{s-1})}$, respectively.
Since  
\[
m^{j+r}-m^r = m^r(m^j-1) \equiv 0 \pmod{\ell},
\]
these leading coefficients are identical, so that
$F\iter{r} \circ G\iter{s} \circ F\iter{j} = F\iter{j+r} \circ G\iter{s}$, whence $F\iter{r}\circ G\iter{s}$ and $F\iter{j}$
commute.  Finally, if we let $r$ be any positive multiple of $j$ such that $r\ge i$, then $F\iter{r}\circ G$
commutes with $F\iter{j}$ and hence with $F\iter{r}$.
\end{proof}

We now prove Theorem~\ref{powerseries}.

\begin{proof}[Proof of Theorem~\ref{powerseries}]
Lemma~\ref{rtof1bis} shows that \eqref{powerseries2} implies \eqref{powerseries1}.
Conversely, if \eqref{powerseries1} holds then by Lemma~\ref{Boettcherlemma} there exists $L\in K[[X]]$ having
lowest-degree term of degree $1$ for which  $L\iter{-1} \circ F \circ L = X^m$.  Writing $A:=X^m$  and  
$B:=L\iter{-1} \circ G \circ L$, it follows that $A$ and $B$ do not generate the free semigroup on the letters
$A$ and $B$.  Since the lowest-degree term of  $B$ has degree $n$, Lemmas~\ref{power2} and \ref{rtof1} imply
that  $B=\alpha X^n$  where  $\alpha$  is a root of unity, so that \eqref{powerseries2} holds.
\end{proof}

We conclude this section by proving Theorem~\ref{ratfun}.

\begin{proof}[Proof of Theorem~\ref{ratfun}]
The implications \eqref{ratfun4}$\implies$\eqref{ratfun3}$\implies$\eqref{ratfun1} are immediate, and
\eqref{ratfun2} implies \eqref{ratfun4} by Lemma~\ref{rtof1bis}.  Thus we need only show that
\eqref{ratfun1} implies \eqref{ratfun2}.  Assume that \eqref{ratfun1} holds.
Put $U:=1/F(1/X)$ and $V:=1/G(1/X)$, so that $0$ is a root of both $U$ and $V$, with respective multiplicities
$m$ and $n$.  By viewing $K(X)$ as a subfield of $K((X))$ in the usual manner,
it follows that $U$ and $V$ are elements of $K[[X]]$ whose lowest-degree terms have degrees $m$ and $n$.
Since \eqref{ratfun1} holds, the semigroup generated by $U$ and $V$
is not the free semigroup on the letters $U$ and $V$.  By Theorem~\ref{powerseries}, there exists 
$L(X)\in  K[[X]]$ with lowest-degree term of degree $1$ for which
$R:=L\iter{-1}\circ U\circ L$ and $S:=L\iter{-1}\circ V\circ L$ satisfy $R=X^m$ and $S=\alpha X^n$
with $\alpha\in K^*$ having finite order.  Now let $\psi$ be the automorphism of the semigroup 
$K(X)\setminus K$ (under the operation of composition) defined by $\psi(W)=1/W(1/X)$, and let
$\phi$ be the automorphism of the semigroup $XK[[X]]$ (again under composition) defined by
$\phi(W)=L\iter{-1}\circ W\circ L$.  Writing $\langle F,G\rangle$ for the semigroup generated by $F$ and $G$,
the restriction of $\psi$ to $\langle F,G\rangle$ has image contained in $X^2 K[[X]]$, so that the composition
$\phi\circ\psi$ induces an isomorphism from $\langle F,G\rangle$ to the semigroup generated by
$\phi(\psi(F))=X^m$ and $\phi(\psi(G))=\alpha X^n$.
\end{proof}
 
\begin{remark}
The conclusion of Theorem~\ref{ratfun} holds for any $F,G\in K(X)$ for which there is a degree-one $M\in K(X)$
for which $\widehat F:=M\iter{-1}\circ F\circ M$ and $\widehat G:=M\iter{-1}\circ G\circ M$ satisfy the hypotheses of
Theorem~\ref{ratfun}.  In other words, this conclusion holds for any nonconstant $F,G\in K(X)$ which have
a common fixed point $\beta\in\mathbb{P}^1(K)$ whose ramification indices $m,n$ in the covers
$F,G\colon\mathbb{P}^1\to\mathbb{P}^1$ satisfy $m,n>1$ and $\charp(K)\nmid mn$.
\end{remark} 
 
\begin{remark}
It would be interesting to prove versions of Theorems~\ref{ratfun} and \ref{powerseries} in case 
$\charp(K)\mid mn$.
Although the conclusion of Lemma~\ref{Boettcherlemma} is not true in this setting, Ruggiero~\cite{Ruggiero}
has provided a potential replacement for Lemma~\ref{Boettcherlemma}, by giving a somewhat
manageable set of representatives for $X^2 K[[X]]$ up to conjugation by elements of  $X K^* + X^2 K[[X]]$.
A thorough study of Ruggiero's representatives would be valuable for both this and other questions.
\end{remark}


\section{Semigroups of polynomials}

In this section we prove Theorem~\ref{poly}.  We will need several preliminary results.

\begin{definition}
We say that a polynomial of degree $m>0$ has \emph{gap form} if it has no term of degree $m-1$.
\end{definition}

\begin{lemma} \label{gap}
Let $K$ be a field and let $F,G\in K[X]$ have degrees $m,n>0$ where $\charp(K)\nmid m$.
Suppose that either $n>1$ or $F$ has gap form.  Then  $F \circ G$  has gap form  if and only if  $G$  has gap form.
\end{lemma}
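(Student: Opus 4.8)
The plan is to write $F(X) = \sum_{i=0}^m a_i X^i$ and $G(X) = \sum_{j=0}^n b_j X^j$ with $a_m, b_n \neq 0$, and to compute directly the two top-degree coefficients of the composite $F \circ G$. Since $\deg(F \circ G) = mn$, and the term of degree $mn$ comes only from $a_m (b_n X^n)^m$, the leading coefficient of $F \circ G$ is $a_m b_n^m$. The coefficient of degree $mn-1$ is the object to control: contributions to $X^{mn-1}$ in $F(G(X))$ arise from $a_m \cdot [\text{coeff of } X^{mn-1} \text{ in } G(X)^m]$, together with, when $n = 1$, a contribution from $a_{m-1} (b_1 X)^{m-1}$. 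First I would expand $G(X)^m = (b_n X^n + b_{n-1}X^{n-1} + \cdots)^m$; the coefficient of $X^{mn-1}$ in this is $m\, b_n^{m-1} b_{n-1}$ (the only way to fall one degree below the top is to replace exactly one factor $b_n X^n$ by $b_{n-1}X^{n-1}$, provided $n - 1 \ge \deg$ of the next term, which is automatic). Thus the coefficient of $X^{mn-1}$ in $F \circ G$ equals $a_m m\, b_n^{m-1} b_{n-1}$ when $n > 1$, and equals $a_m m\, b_1^{m-1} b_0 + a_{m-1} b_1^{m-1}$ when $n = 1$.

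Now I would split into the two hypothesis cases. If $n > 1$: the coefficient of $X^{mn-1}$ in $F \circ G$ is $a_m m\, b_n^{m-1} b_{n-1}$, and since $a_m \ne 0$, $b_n \ne 0$, and $m \ne 0$ in $K$ (here is where $\charp(K) \nmid m$ enters), this vanishes if and only if $b_{n-1} = 0$, i.e.\ if and only if $G$ has gap form. If instead $n = 1$ but $F$ has gap form, then $a_{m-1} = 0$, so the coefficient of $X^{mn-1} = X^{m-1}$ in $F \circ G$ is $a_m m\, b_1^{m-1} b_0$, which again vanishes iff $b_0 = 0$, i.e.\ iff $G$ has gap form (note $G$ has degree $n=1$, so "gap form" for $G$ means no term of degree $0$). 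In both cases the equivalence "$F \circ G$ has gap form $\iff$ $G$ has gap form" follows.

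The computation is entirely routine; the only subtlety is making sure the stated hypothesis "either $n > 1$ or $F$ has gap form" is exactly what is needed to kill the stray $a_{m-1} b_1^{m-1}$ term that appears when $n = 1$, and that $\charp(K) \nmid m$ is used precisely to ensure $m \neq 0$ in $K$ so that $m\,b_n^{m-1}b_{n-1}$ does not vanish for spurious reasons. I do not expect any real obstacle here; I would just present the coefficient extraction cleanly, perhaps by writing $G(X) = b_n X^n(1 + (b_{n-1}/b_n)X^{-1} + \cdots)$ heuristically but actually justifying it by the binomial/multinomial expansion at the level of the top two terms, and then reading off the two cases.
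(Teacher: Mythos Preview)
Your proposal is correct and follows the same approach as the paper: compute the coefficient of $X^{mn-1}$ in $F\circ G$ directly and observe that it equals $a_m\, m\, b_n^{m-1} b_{n-1}$ under the stated hypothesis. The paper compresses this into a single line without making the case split $n>1$ versus $n=1$ explicit, whereas you spell out why the extra $a_{m-1}b_1^{m-1}$ term is absent in each case; your version is more detailed but the argument is identical.
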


\begin{proof}
Write $F=\sum_{i=0}^{m} \alpha_i X^{m-i}$ and $G=\sum_{j=0}^{n} \beta_j X^{n-j}$ where $\alpha_i,\beta_j\in K$.
Then the coefficient of $X^{mn-1}$ in $F \circ G$ is $\alpha_0 m \beta_0^{n-1} \beta_1$, which equals $0$ 
precisely when $\beta_1=0$.
\end{proof}

\begin{definition}
For any field $K$ and any  $F(X) \in K[X]$ which has at least two terms, we define $\LL(F)$ to be the
greatest common divisor of the set of all differences between degrees of pairs of terms of $f(X)$.  Thus, $\LL(F)$
is the largest integer $s$ for which we can write  $F(X) = X^r A(X^s)$  with  $r\ge 0$  and  $A\in K[X]$.  If  $F(X)$  is
a monomial then we define $\LL(F):=0$.
\end{definition}

\begin{lemma} \label{parity}
For any field $K$, any nonconstant $F(X) \in K[X]$, and any $\alpha\in K^*$, the following are equivalent:
\begin{itemize}
\item there exists $\beta\in K$ for which $\beta F(X) = F(\alpha X)$
\item $\alpha^{\LL(F)}=1$.
\end{itemize}
\end{lemma}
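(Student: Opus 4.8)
The plan is to write $F(X)=\sum_{j} c_j X^{e_j}$ where the sum runs over the (nonempty) set $E$ of exponents $e_j$ with $c_j\ne 0$, and simply compare the two sides of $\beta F(X)=F(\alpha X)$ coefficient by coefficient. Since $\alpha\in K^*$, the polynomial $F(\alpha X)$ has exactly the same support $E$ as $F$, with the coefficient of $X^{e_j}$ equal to $c_j\alpha^{e_j}$. Thus $\beta F(X)=F(\alpha X)$ holds for some $\beta\in K$ if and only if $c_j\alpha^{e_j}=\beta c_j$ for all $j$, i.e.\ $\alpha^{e_j}=\beta$ for every exponent $e_j\in E$ (note $\beta\ne 0$ automatically, since $F\ne 0$ and $\alpha\ne 0$). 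Equivalently, $\alpha^{e_j}$ takes the same value for all $j$, which is to say $\alpha^{e_j-e_k}=1$ for every pair $j,k$.

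Now I would invoke the definition of $\LL(F)$. If $F$ is a monomial, then $E$ is a singleton, the condition ``$\alpha^{e_j-e_k}=1$ for all pairs'' is vacuous and hence always true, while $\LL(F)=0$ so $\alpha^{\LL(F)}=\alpha^0=1$ is also always true; the two conditions agree. If $F$ has at least two terms, then by definition $\LL(F)=\gcd\{e_j-e_k : j,k\}$ (a positive integer). The set of integers $d$ with $\alpha^d=1$ is a subgroup of $\Z$, hence of the form $t\Z$ for some $t\ge 0$ (with $t=0$ meaning only $d=0$ works, i.e.\ $\alpha$ has infinite order). The condition ``$\alpha^{e_j-e_k}=1$ for all pairs'' says precisely that every generator of the subgroup $\LL(F)\Z$ lies in $t\Z$, equivalently $\LL(F)\in t\Z$, equivalently $\alpha^{\LL(F)}=1$. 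So again the two conditions coincide.

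There is no real obstacle here; the only thing to be slightly careful about is the bookkeeping between the two cases in the definition of $\LL(F)$ (monomial versus at least two terms), and the observation that the subgroup $\{d\in\Z:\alpha^d=1\}$ being generated by $\LL(F)$ is equivalent to it containing $\LL(F)$, since it already contains all the differences $e_j-e_k$ whose gcd is $\LL(F)$. I would present the argument in one short paragraph handling the monomial case, then one paragraph for the general case using the subgroup-of-$\Z$ remark.
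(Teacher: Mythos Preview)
Your argument is correct and is exactly the approach the paper takes: the paper's entire proof reads ``Compare coefficients,'' and your proposal is simply this comparison written out in full, including the monomial/non-monomial case split and the subgroup-of-$\Z$ observation.
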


\begin{proof}
Compare coefficients.
\end{proof}

\begin{lemma} \label{Iiterate}
Let $K$ be a field, and let $F(X)\in K[X]$ have degree $m>1$ where $\charp(K)\nmid m$.  
Then  $\LL(F\iter{i})=\LL(F)$  for every  $i>0$.
\end{lemma}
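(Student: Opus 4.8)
The plan is to reduce the problem about iterates to a single composition step, then to a fixed-point/normalization argument. First I would record the two easy inclusions. Since $F\iter{i}=F\circ F\iter{i-1}$ and $F$ has a well-defined $\LL$, writing $F(X)=X^a A(X^s)$ with $s=\LL(F)$ shows at once that $s\mid\LL(F\iter{i})$: substituting any polynomial of the form $X^bB(X^s)$ into $F$ lands back in $X^{*}\,K[X^s]$, so by induction every iterate lies in $X^{*}K[X^s]$, giving $\LL(F)\mid\LL(F\iter{i})$. (If $F$ is a monomial then so is every iterate and $\LL(F\iter i)=0=\LL(F)$, so I may assume $F$ has at least two terms.) The content of the lemma is therefore the reverse divisibility $\LL(F\iter{i})\mid\LL(F)$, and by induction on $i$ it suffices to prove: if $t:=\LL(F\iter{2})$ then $t\mid\LL(F)$; more precisely, it suffices to handle $i=2$ and then iterate, since $\LL(F\iter{i})\mid\LL(F\iter{2})$ would follow by applying the $i=2$ case to $F\iter{i-1}$ in place of $F$ once we know $\LL$ is composition-stable — so actually the clean statement to prove is: $\LL(F\circ G)$ divides $\gcd$-type data of $F$ and $G$ when $G=F$.

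The main idea I would use is Lemma~\ref{parity}: $\LL(F)$ is characterized as the largest $s$ such that $\alpha^{s}=1$ forces $\beta F(X)=F(\alpha X)$ for some $\beta$, equivalently $\LL(F)=\#\{\alpha\in\bar K^{*}:F(\alpha X)=\beta F(X)\text{ for some }\beta\}$ when this set of $\alpha$'s is viewed as the group $\mu_{\LL(F)}$ of roots of unity. So set $d:=\LL(F\iter{i})$ and pick a primitive $d$-th root of unity $\zeta$ (here $\charp(K)\nmid m$ is used only in Lemma~\ref{Iiterate}'s hypotheses to guarantee the relevant coefficient $\alpha_0 m\beta_0^{n-1}\cdots\ne0$-type nonvanishing persists, but for the $\LL$-characterization I just need $\zeta\ne1$). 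By Lemma~\ref{parity} there is $\beta$ with $F\iter{i}(\zeta X)=\beta\,F\iter{i}(X)$. I want to descend this symmetry of $F\iter{i}$ to a symmetry of $F$ itself. The key step: compare the relation $F\iter{i}(\zeta X)=\beta F\iter{i}(X)$ with $F\iter{i}=F\circ F\iter{i-1}$. The left side is $F\bigl(F\iter{i-1}(\zeta X)\bigr)$; I would like to say $F\iter{i-1}(\zeta X)=\gamma\,F\iter{i-1}(X)$ for some $\gamma$ with $\gamma^{\LL(F)}=1$ — i.e. that the symmetry "propagates inward" through the iterate — and then unwinding gives $F(\gamma X)=$ (unit)$\cdot F(X)$ and hence $\zeta\in\mu_{\LL(F)}$, i.e. $d\mid\LL(F)$. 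To make the propagation rigorous I would argue by comparing leading terms: write $F(X)=\alpha_0 X^m+\dots$ and track how the scaling $X\mapsto\zeta X$ on the inside of $F\iter i$ acts, using that a nonconstant polynomial identity $F(P(X))=\beta F(Q(X))$ with $\deg P=\deg Q$ forces $P=\eta Q+(\text{lower order adjustment})$ constrained enough that, after induction on $i$, $P=F\iter{i-1}(\zeta X)$ and $Q=F\iter{i-1}(X)$ must differ by a scalar.

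The step I expect to be the genuine obstacle is exactly this inward propagation: from $F(P(X))=\beta F(Q(X))$ one cannot in general conclude $P=\eta Q$ for polynomials, so I need to exploit the special structure — namely that $P(X)=F\iter{i-1}(\zeta X)$ and $Q(X)=F\iter{i-1}(X)$ already have the \emph{same} non-leading coefficients up to the scalings $\zeta^{k}$, so the constraint $F\circ P=\beta\,F\circ Q$ pins down those scalings. Concretely I would set up an induction on $i$: the base case $i=1$ is Lemma~\ref{parity} verbatim; for the step, given $\LL(F\iter{i})=\LL(F)$, apply the $i=2$ instance to the pair $(F,\,F\iter{i})$ — but $\LL(F\iter{i+1})=\LL(F\circ F\iter i)$, and I'd prove the general composition fact that $\LL(F\circ G)\mid\LL(G)$ whenever $\LL(G)\mid\LL(F)$, by the leading-coefficient comparison above, which cleanly closes the induction. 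Thus the real work is one lemma: a symmetry $\alpha^{\LL(F\circ G)}=1\Rightarrow G(\alpha X)=\gamma G(X)$, proved by feeding the identity $F(G(\alpha X))=\beta F(G(X))$ into a term-by-term comparison and using $\charp(K)\nmid\deg F$ to divide out the unavoidable multiplicative factor, exactly as in the proof of Lemma~\ref{gap}.
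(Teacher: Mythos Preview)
Your proposal contains a genuine gap. The ``one lemma'' you isolate at the end --- that $\alpha^{\LL(F\circ G)}=1$ forces $G(\alpha X)=\gamma G(X)$ for some $\gamma$, equivalently that $\LL(F\circ G)\mid\LL(G)$ whenever $\LL(G)\mid\LL(F)$ --- is false, and your sketched ``term-by-term comparison using $\charp(K)\nmid\deg F$'' cannot prove it. Take $K=\Q$, $F(X)=X^2-2X$ and $G(X)=X^3+X+1$. Then $\LL(F)=\LL(G)=1$, so your hypothesis $\LL(G)\mid\LL(F)$ holds, and $\charp(K)\nmid\deg F$. But
\[
F\circ G=(X^3+X+1)^2-2(X^3+X+1)=X^6+2X^4+X^2-1,
\]
so $\LL(F\circ G)=2\nmid 1=\LL(G)$. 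Concretely, with $\alpha=-1$ one has $F(G(-X))=F(G(X))$, yet $G(-X)=-X^3-X+1$ is not a scalar multiple of $G(X)$. So the ``inward propagation'' step you flag as the obstacle really does fail, and the hypothesis $\LL(G)\mid\LL(F)$ you add does not rescue it.

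What you are missing is exactly the case split the paper makes on whether $F$ has gap form. The paper proves the unconditional implication $\LL(F\circ G)\mid\LL(G)$ \emph{only under the hypothesis that $F$ has gap form} (this is where the term-by-term argument actually goes through: the gap ensures the degree-$(mn-t)$ coefficient of $F\circ G$ comes solely from the leading term of $F$). When $F$ is not in gap form one has $\LL(F)=1$, and then Lemma~\ref{gap} gives directly that $F\iter{i}$ is not in gap form either, whence $\LL(F\iter{i})=1=\LL(F)$. Your root-of-unity descent idea is morally the same as the paper's item~(2) when $F$ has gap form (and could be made to work there via Lemma~\ref{uniq}: from $F(G(\alpha X))=\beta F(G(X))$ one gets $G(\alpha X)=L\circ G$ with $L$ linear, and gap form forces $L(0)=0$), but you need a separate argument --- not a strengthening of the same lemma --- for the non-gap-form case.
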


\begin{proof}
Let $G(X)\in K[X]$ have degree $n>0$.  We will show that
\begin{enumerate}
\item $\gcd(\LL(F),\LL(G)) \mid  \LL(F\circ G)$  if $\LL(F)+\LL(G)>0$
\item If $F$ has gap form then $\LL(F \circ G)\mid \LL(G)$.
\end{enumerate}
First we show that the conclusion of Lemma~\ref{Iiterate} follows from (1) and (2).  
If $F$ has gap form and $\LL(F)>0$ then (1) implies that $\LL(F)\mid\LL(F\iter{i})$,
and (2) implies that $\LL(F\iter{j+1})\mid \LL(F\iter{j})$ for every $j>0$, so that 
$\LL(F\iter{i})\mid \LL(F)$ and thus $\LL(F)=\LL(F\iter{i})$.
If $\LL(F)=0$ then $F$ is a monomial, so also $F\iter{i}$ is a monomial and thus $\LL(F\iter{i})=0$.
Finally, if $F$ does not have gap form then Lemma~\ref{gap} implies that $F\iter{i}$ does not have gap form,
so that $\LL(F\iter{i})=1=\LL(F)$.

It remains to prove (1) and (2).  We begin with (1).  Let $d:=\gcd(\LL(F),\LL(G))$, so that $F(X)=X^r A(X^d)$ and
$G(X)=X^s B(X^d)$ for some $A,B\in K[X]$ and some $r,s\ge 0$.  Then plainly $F\circ G$ equals $X^{rs} C(X^d)$
for some $C\in K[X]$, so that $d\mid \LL(F\circ G)$.  

Now we prove (2).  It suffices to prove that, if an integer $s>0$ does not divide $\LL(G)$, then
also $s\nmid \LL(F\circ G)$.  So let $s$ be a positive integer for which $s\nmid \LL(G)$.
Writing $G(X)=\sum_{j=0}^n \beta_j X^{n-j}$ with $\beta_j\in K$, it follows that $\beta_j\ne 0$ for some $j$ with
$s\nmid j$.  Letting $t$ be the least such integer $j$, we will show that $F\circ G$ has a term of degree $mn-t$,
which yields the desired conclusion since then $\LL(F\circ G)\mid t$ and thus $s\nmid \LL(F\circ G)$.
Let $\alpha$ be the leading coefficient of $F(X)$.  Since $F(X)$ has gap form, we know that 
$\deg(F(X)-\alpha X^m)\le m-2$, so that $(F(X)-\alpha X^m)\circ G(X)$ has degree at most $(m-2)n$.  
Since $t\le n<2n$,
it follows that the coefficient of $X^{mn-t}$ in $F\circ G$ equals the coefficient of $X^{mn-t}$ in $\alpha X^m\circ G$.
The latter coefficient is the sum of $\alpha m\beta_0^{m-1}\beta_t$ and
the coefficient of $X^{mn-t}$ in $\alpha X^m\circ H$, where $H(X):=\sum_{j=0}^{t-1} \beta_j X^{n-j}$.
Since all terms of $H(X)$ have degree congruent to $n\pmod{s}$, all terms of $H(X)^m$ must have
degree congruent to $mn\pmod{s}$, and thus $H(X)^m$ has no term of degree $mn-t$.  It follows that the
coefficient of $X^{mn-t}$ in $F\circ G$ equals $\alpha m\beta_0^{m-1}\beta_t$, and hence is nonzero.  Thus
$\LL(F\circ G)\mid t$, so that $s\nmid \LL(F\circ G)$.  Hence every positive integer which divides $\LL(F\circ G)$
must also divide $\LL(G)$, so we conclude that $\LL(F\circ G)\mid \LL(G)$.
\end{proof}

\begin{remark}
The hypothesis $\deg(F)>1$ cannot be removed, since for $F:=1-X$ we have $\LL(F)=1$ and $\LL(F\iter{2})=0$.
The hypothesis $\charp(K)\nmid \deg(F)$ cannot be removed, since for $F:=X^p+X$ with $p:=\charp(K)>0$ we have
$F\iter{p}=X^{p^p}+X$, so that $\LL(F\iter{p})=p^p-1>p-1=\LL(F)$.  In item (2) in the proof, the hypothesis that $F$
has gap form cannot be removed, in light of the example $F(X):=(X+1)^m$ and $G(X):=X^n-1$.
\end{remark}

We will also use two results of Dorfer and Woracek~\cite{DW}, which generalize classical results of Fatou,
Julia and Ritt.  We state these after explaining the polynomials $T_m(X)$ which occur in these results.

\begin{remark} \label{chebrem}
For any field $K$ and any positive integer $m$, we define the ``normalized Chebyshev polynomial" $T_m(X)$ 
to be the unique polynomial in $K[X]$ which satisfies $T_m(X+X^{-1})=X^m+X^{-m}$.  One way to construct
$T_m(X)$ is via the identity $T_m(X)=D(X,1)$, where $D(X,Y)\in K[X,Y]$ satisfies $D(X+Y,XY)=X^m+Y^m$.
Alternately, if $C_m(X)\in\mathbb{Z}[X]$
denotes the classical degree-$m$ Chebyshev polynomial which satisfies
$C_m(\cos\theta)=\cos m\theta$, then for $\charp(K)=0$ we have $T_m(X) = 2X\circ C_m(X)\circ X/2$.
The main advantage of $T_m(X)$ to $C_m(X)$ occurs in characteristic $2$, since the reduction mod~$2$ of 
$C_m(X)$ is either $1$ or $X$, whereas $T_m(X)$ is a monic degree-$m$ polynomial in $K[X]$ for every field $K$.
For further details and results about $T_m(X)$, see for instance \cite{ACZ,LMT}.
\end{remark}

\begin{proposition}\emph{(\cite[Thm.~5.1]{DW})} \label{commute}
Let $K$ be an algebraically closed field, let $m,n>1$ be integers
not divisible by $\charp(K)$, and let $F,G\in K[X]$ satisfy $\deg(F)=m$, $\deg(G)=n$, and $F \circ G = G \circ F$.
Then there is a degree-one $L\in K[X]$ for which the conjugates  $L\iter{-1} \circ F \circ L$  and  
$L\iter{-1} \circ G \circ L$  are one of the following:
\renewcommand{\theenumi}{\ref{commute}.\arabic{enumi}}
\renewcommand{\labelenumi}{(\thethm.\arabic{enumi})}
\begin{enumerate}
\item\label{commute1} $\alpha X^m$  and  $\beta X^n$,  where  $\alpha,\beta\in K^*$
\item\label{commute2} $\alpha T_m$  and  $\beta T_n$,  where  $\alpha,\beta\in\{1,-1\}$
\item\label{commute3} $\alpha H\iter{i}$  and  $\beta H\iter{j}$,  where $i,j>0$ and $\alpha,\beta\in K^*$ and $H\in K[X]$ 
satisfy  $\alpha^{\LL(H)}=\beta^{\LL(H)}=1$.
\end{enumerate}
\end{proposition}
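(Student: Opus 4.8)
The plan is to study $F$ and $G$ near their common totally ramified fixed point at $\infty$, where Lemma~\ref{Boettcherlemma} applies, and then to extract the rigid trichotomy from an analysis of the finite critical points. First I would normalize: after conjugating $F$ and $G$ by a common degree-one polynomial, arrange that $F$ is monic and has gap form, which is possible because $\charp(K)\nmid m$. Then $G\circ F=F\circ G$ has gap form because $F$ does (Lemma~\ref{gap} with $\deg F=m>1$), and applying Lemma~\ref{gap} once more forces $G$ to have gap form as well.

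Next I would pass to the fixed point $\infty$. Conjugating by $1/X$, the series $f:=1/F(1/X)$ and $g:=1/G(1/X)$ lie in $XK[[X]]$ with lowest-degree terms $X^m$ and $\gamma X^n$ respectively, they still commute, and the leading coefficient of $f$ is the $(m-1)$-st power $1$. By Lemma~\ref{Boettcherlemma} there is $\ell\in XK[[X]]$ with $\ell'(0)=1$ and $\ell\iter{-1}\circ f\circ\ell=X^m$; set $h:=\ell\iter{-1}\circ g\circ\ell$, so that $h\in XK[[X]]$ has lowest-degree term of degree $n$ and commutes with $X^m$. Comparing the coefficients of $X^{mn+1},X^{mn+2},\dots$ in the identity $h(X^m)=h(X)^m$, exactly as in the proof of Lemma~\ref{Boettcherlemma} and again using $\charp(K)\nmid m$, kills every coefficient of $h$ beyond $X^n$, so $h=\gamma X^n$, and then matching the coefficient of $X^{mn}$ gives $\gamma^{m-1}=1$. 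Reinterpreted at $\infty$, this says that $F$ and $G$ share a B\"ottcher coordinate: there is a formal Laurent series $L=X+O(1)$ at $\infty$ with $L\iter{-1}\circ F\circ L=X^m$ and $L\iter{-1}\circ G\circ L=\delta X^n$ for a root of unity $\delta$ with $\delta^{m-1}=1$.

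The remaining step, which is the substantial one, is to upgrade this formal conjugacy to the stated rigid conclusion. Here I would study the finite critical points of $F$, whose total ramification weight is $m-1$ by Riemann--Hurwitz (since $\infty$ absorbs weight $m-1$); because $F\circ G=G\circ F$, the polynomial $G$ moves these critical points and their forward $F$-orbits among themselves in a constrained way, while in the B\"ottcher coordinate $F$ and $G$ already act as the monomials $X^m$ and $\delta X^n$. The trichotomy should then come from how the finite critical orbit of $F$ sits with respect to this coordinate: if it collapses to a single point, $F$ and $G$ are linearly conjugate to $\alpha X^m$ and $\beta X^n$; if it reduces to a two-point set fixed or swapped by $F$, one recovers $\pm T_m$ and $\pm T_n$ (this is where the gap-form normalization and the defining identity for $T_m$ in Remark~\ref{chebrem} enter); and otherwise the combinatorics of the critical orbit produces a lower-degree polynomial $H$ of which $F$ and $G$ are iterates up to roots of unity, at which point one invokes the known classification of polynomials with a common iterate, with Lemmas~\ref{parity} and \ref{Iiterate} pinning down that the admissible roots of unity are exactly those with $\alpha^{\LL(H)}=\beta^{\LL(H)}=1$. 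I expect this last case to be the main obstacle: the classical treatment by Fatou, Julia and Ritt of the corresponding step uses complex-analytic tools (Julia sets, Green's functions, external rays) that are unavailable in positive characteristic, and replacing them by purely formal arguments over an arbitrary field --- which is precisely the contribution of Dorfer and Woracek --- is the delicate part, so a faithful proof would follow their route.
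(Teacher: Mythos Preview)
The paper does not prove this proposition at all: it is quoted verbatim as \cite[Thm.~5.1]{DW} and used as a black box. So there is no ``paper's own proof'' to compare against.

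As for your proposal itself: the first two paragraphs are correct and in fact reproduce the opening move of Dorfer--Woracek's argument. Your use of Lemma~\ref{gap} to force $G$ into gap form is clean, and the claim that a power series commuting with $X^m$ must be a monomial is right (if $h=\gamma X^n(1+r)$ with $r\in XK[[X]]$, then $h(X^m)=h(X)^m$ gives $\gamma^{m-1}=1$ and $1+r(X^m)=(1+r(X))^m$; comparing lowest-degree terms of $r$ and using $\charp(K)\nmid m$ forces $r=0$). So you have correctly established that commuting $F,G$ share a formal B\"ottcher coordinate at $\infty$.

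But your third paragraph is not a proof; it is an outline that explicitly concedes the real content lies elsewhere (``a faithful proof would follow their route''). The passage from ``$F$ and $G$ are simultaneously formally conjugate to $X^m$ and $\delta X^n$ at $\infty$'' to the trichotomy is the entire substance of the result, and your sketch of it via critical orbits is both vague and, as you yourself note, inapplicable in positive characteristic. Dorfer and Woracek's actual argument does not proceed via critical points or Julia sets; it works entirely with formal power series identities and a careful induction, which is why it succeeds over arbitrary fields. If you want to supply a proof rather than a citation, that is the argument you would need to reproduce --- what you have written is, in effect, the same citation the paper makes, preceded by a (correct) warm-up.
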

\renewcommand{\theenumi}{\arabic{enumi}}
\renewcommand{\labelenumi}{(\arabic{enumi})}

\begin{proposition}\emph{(\cite[Thm.~4.3]{DW})} \label{commoniterate}
Let $K$ be an algebraically closed field, let $m,n>1$ be integers
not divisible by $\charp(K)$, and let $F,H\in K[X]$ have degrees $m$ and $n$, where  $H$  has gap form.
If $F\iter{r} = \beta H\iter{j}$  for some  $r,j>0$  and some  $\beta\in K^*$  such that  
$\beta^{\LL(F)}=\beta^{\LL(H)}=1$,
then there exists a polynomial $A(X) \in K[X]$ having gap form for which 
$\LL(F)\mid \LL(A)$ and  $F = \gamma A\iter{s}$  and  $H = \delta A\iter{t}$  for some $s,t>0$ and some
$\gamma,\delta\in K^*$ such that  $\gamma^{\LL(F)}=\delta^{\LL(F)}=1$.
\end{proposition}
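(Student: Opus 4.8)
The plan is to adapt the method Ritt used in \cite{Ritt1920} for polynomials with a common iterate (which is also the route of Dorfer and Woracek), keeping track of scaling constants by means of the invariant $\LL$. First, comparing degrees in $F\iter{r}=\beta H\iter{j}$ gives $m^r=n^j$, and a routine valuation argument then yields an integer $d>1$ with $\charp(K)\nmid d$ and coprime positive integers $a,b$ such that $m=d^a$ and $n=d^b$ (and one checks $j=ga$, $r=gb$ for some $g>0$). By Lemma~\ref{Iiterate}, $\LL(F)=\LL(F\iter{r})=\LL(\beta H\iter{j})=\LL(H)$; if this common value is $0$ then $F$ and $H$ are monomials, and the conclusion holds with $A:=X^d$, $s:=a$, $t:=b$, and $\gamma,\delta$ the leading coefficients of $F$ and $H$. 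So assume $\LL(F)=\LL(H)>0$, whence $\beta$ is a root of unity.

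Next I would reduce to the monomial case via B\"ottcher coordinates. Put $u:=1/F(1/X)$ and $w:=1/H(1/X)$; these lie in $K[[X]]$ with lowest-degree terms of degrees $m$ and $n$, and the operation $W\mapsto 1/W(1/X)$ is a semigroup involution carrying $F\iter{r}=\beta H\iter{j}$ to $u\iter{r}=\beta^{-1}w\iter{j}$. Since $K$ is algebraically closed, Lemma~\ref{Boettcherlemma} provides $L\in K[[X]]$ with lowest-degree term of degree $1$ satisfying $L\iter{-1}\circ u\circ L=X^m$. Conjugating $u\iter{r}=\beta^{-1}w\iter{j}$ by $L$ yields $X^{m^r}=M\circ B\iter{j}$, where $M:=L\iter{-1}\circ(\beta^{-1}X)\circ L$ and $B:=L\iter{-1}\circ w\circ L$ has lowest-degree term of degree $n$. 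Comparing the two lowest-degree terms, in the spirit of the proof of Lemma~\ref{power2}, forces $B$ to be a monomial $bX^n$, and then forces $M=\beta^{-1}X$ and $b^{(n^j-1)/(n-1)}=\beta$; in particular $L$ commutes with $\beta^{-1}X$, so $L$ is supported only on exponents congruent to $1$ modulo $\operatorname{ord}(\beta)$.

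The core of the argument is to produce a genuine polynomial $A$ realizing the claimed structure. In the $L$-coordinate, $F$ and $H$ are the monomials $X^m=(X^d)\iter{a}$ and $bX^n=b\,(X^d)\iter{b}$, so one expects $A$ to correspond to a monomial $cX^d$: the formal germ $\widetilde A:=L\circ(cX^d)\circ L\iter{-1}$ has $\widetilde A\iter{a}=L\circ\bigl(c^{(d^a-1)/(d-1)}X^m\bigr)\circ L\iter{-1}$ and $\widetilde A\iter{b}=L\circ\bigl(c^{(d^b-1)/(d-1)}X^n\bigr)\circ L\iter{-1}$, and — using the support of $L$, the coprimality $\gcd\!\bigl(\tfrac{d^a-1}{d-1},\tfrac{d^b-1}{d-1}\bigr)=\tfrac{d^{\gcd(a,b)}-1}{d-1}=1$, and the relation $\beta=b^{(n^j-1)/(n-1)}$ — one chooses $c$ so that $\widetilde A\iter{a}$ and $\widetilde A\iter{b}$ are equal to $u$ and $w$ up to multiplication by powers of $\beta$. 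Reversing the substitution $W\mapsto 1/W(1/X)$ then gives $F=\gamma A\iter{a}$ and $H=\delta A\iter{b}$ (so $s:=a$ and $t:=b$) with $\gamma,\delta$ powers of $\beta$, \emph{provided} $\widetilde A$ really is $1/A(1/X)$ for an honest polynomial $A$ of degree $d$. Granting this, Lemma~\ref{gap} shows $A$ has gap form (since $F=\gamma A\iter{a}$ does and $\deg A>1$), Lemma~\ref{Iiterate} gives $\LL(A)=\LL(A\iter{a})=\LL(F)$ so that $\LL(F)\mid\LL(A)$, and Lemma~\ref{parity} converts the fact that $\gamma,\delta$ are powers of $\beta$ into $\gamma^{\LL(F)}=\delta^{\LL(F)}=1$.

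The main obstacle is precisely the emphasized clause: one must show that the formal germ $\widetilde A$, built by transporting a monomial through a B\"ottcher coordinate, is $1/A(1/X)$ for an actual polynomial $A$. A formal iterative root of a polynomial need not be a polynomial, and moreover $F$ and $H$ need not commute nor even have commuting iterates (so the classification of commuting polynomials, Proposition~\ref{commute}, does not apply directly), so this step genuinely uses the hypothesis that $F$ \emph{and} $H$ share an iterate, not just that each admits an iterative root. Concretely I would argue, as in \cite{DW}, through the ramification of the covers $F,H,A\colon\mathbb{P}^1\to\mathbb{P}^1$ over their common totally ramified fixed point at $\infty$, together with L\"uroth's theorem applied to the finite poset of subfields of $K(X)$ lying between $K\bigl(F\iter{r}(X)\bigr)$ and $K(X)$: the $F$-iterates and the $H$-iterates give two maximal chains in this poset, and one must identify the intermediate field singled out by the B\"ottcher coordinate as $K\bigl(A(X)\bigr)$ for a polynomial $A$, then normalize $A$ to have gap form with $\LL(A)=\LL(F)$. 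Controlling how this interacts with the constants $\beta,\gamma,\delta$ — so that one lands exactly on $A$, and not merely on a conjugate of it — is the delicate part, and is what the invariant $\LL$ and Lemmas~\ref{gap}, \ref{parity} and \ref{Iiterate} are engineered to handle.
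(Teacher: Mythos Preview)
The paper does not prove this proposition; it is quoted from Dorfer--Woracek \cite{DW}, so strictly speaking there is no published proof here to compare against. The \LaTeX\ source does contain a commented-out sketch by the authors, and it follows the same B\"ottcher-coordinate route you outline, so your plan is on the right track structurally.

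The crux you flag, however, is not resolved by the method you propose. L\"uroth's theorem describes the lattice of intermediate fields between $K(F\iter{r}(X))=K(H\iter{j}(X))$ and $K(X)$, and the $F$- and $H$-chains sit inside it, but the B\"ottcher coordinate $L$ is a \emph{formal} power series and does not single out any subfield of $K(X)$. In particular, nothing you have written establishes $K(H\iter{\tau}(X))\subset K(F\iter{\sigma}(X))$ for the B\'ezout exponents with $b\tau-a\sigma=1$; the indices are compatible, but compatible indices do not force containment (compare $K(X^2)$ and $K(X^3)$ over $K(X^6)$). Since the proposition's conclusion is precisely that such a common refinement exists, invoking it here is circular. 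The device that actually closes this gap, used in \cite{DW} and in the authors' sketch, is a Laurent-series separation: the formal factor $\widetilde h$ with $w\iter{\tau}=\widetilde h\circ u\iter{\sigma}$ has $1/\widetilde h(y)=P(1/y)+y\,Q(y)$ for some polynomial $P$ and power series $Q$; substituting $y=1/F\iter{\sigma}(1/x)$ into $H\iter{\tau}(1/x)=\bigl(1/\widetilde h\bigr)\bigl(1/F\iter{\sigma}(1/x)\bigr)$, the left side and the $P$-term are polynomials in $1/x$ while the $Q$-term contributes only positive powers of $x$, forcing $Q=0$ and giving a genuine polynomial $h=P$ with $H\iter{\tau}=h\circ F\iter{\sigma}$. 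Your closing arguments about gap form, $\LL$, and the constants then go through. A smaller issue: ``comparing the two lowest-degree terms'' does not by itself make $B$ a monomial --- from $X^{m^r}=M\circ B\iter{j}$ you get only $\gap(B)=\gap(B\iter{j})\ge n^j$; one must either iterate over all $u\iter{kr}=\beta_k^{-1}w\iter{kj}$, or first prove that $L$ commutes with $\beta X$ by a separate gap argument, after which $M=\beta^{-1}X$ and $B\iter{j}=\beta X^{n^j}$ give $\gap(B)=\infty$.
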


\begin{remark} \label{sameI}
In light of Lemma~\ref{Iiterate}, the hypothesis $\beta^{\LL(F)}=\beta^{\LL(H)}=1$ in 
Proposition~\ref{commoniterate} can be weakened to either $\beta^{\LL(F)}=1$ or $\beta^{\LL(H)}=1$.  For, since
$F\iter{r}=\beta H\iter{j}$, we have
\[
\LL(F)=\LL(F\iter{r})=\LL(\beta H\iter{j})=\LL(H\iter{j})=\LL(H).
\]
\end{remark}

When applying the previous two results, we will need to know the polynomials which have an
iterate being either a monomial or a Chebyshev polynomial.  We describe these now.

\begin{lemma} \label{monit}
Let $K$ be a field and let $F\in K[X]$ satisfy $\deg(F)>1$ and $\charp(K)\nmid\deg(F)$.
If $F\iter{i}(X)$ is a monomial for some $i>0$ then $F$ is a monomial.
\end{lemma}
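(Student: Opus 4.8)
The plan is to obtain this at once from Lemma~\ref{Iiterate}. Recall that, by the definition of $\LL$, a nonzero polynomial $H$ satisfies $\LL(H)=0$ exactly when $H$ is a monomial (if $H$ has two or more terms then $\LL(H)$ is a positive integer). The hypotheses imposed on $F$ here---that $\deg(F)=m>1$ and $\charp(K)\nmid m$---are precisely the hypotheses of Lemma~\ref{Iiterate}, so that lemma yields $\LL(F)=\LL(F\iter{i})$ for the given $i>0$. Now if $F\iter{i}$ is a monomial then $\LL(F\iter{i})=0$, hence $\LL(F)=0$, and therefore $F$ is a monomial. There is essentially no obstacle to surmount: the real content has already been packaged into Lemma~\ref{Iiterate}, and this lemma is a direct corollary.

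If one preferred not to quote Lemma~\ref{Iiterate}, one could instead argue via roots. Write $F\iter{i}=F\iter{i-1}\circ F$; after extending scalars to $\overline{K}$ if necessary, pick any root $\gamma$ of $F\iter{i-1}$ (which is possible since $\deg F\iter{i-1}=m^{i-1}\ge 1$). Every root of $F(X)-\gamma$ is then a root of $F\iter{i}(X)=cX^{m^i}$, so $F(X)-\gamma$ has degree $m$ with all of its roots equal to $0$, forcing $F(X)-\gamma=c'X^m$ where $c'$ is the leading coefficient of $F$; comparing coefficients gives $F=aX^m+b$ with $a,b\in K$ and $a\ne 0$. One is then left to exclude the case $b\ne 0$, and this is exactly the point where an iteration argument in the spirit of Lemma~\ref{Iiterate} becomes necessary (for instance, when $\charp(K)\nmid m$ and $b\ne 0$ the coefficient $\binom{m}{1}$ is nonzero, so $F\iter{2}=a^{m+1}X^{m^2}+ma^m b X^{m(m-1)}+\cdots$ already has at least two terms, and more generally $\LL$ of the iterates stays positive). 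Thus the $\LL$-based proof above is the efficient one, and it is the route I would take.
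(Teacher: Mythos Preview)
Your primary proof via Lemma~\ref{Iiterate} is correct: the hypotheses match, and indeed $\LL(H)=0$ exactly when $H$ is a monomial, so $\LL(F)=\LL(F\iter{i})=0$ forces $F$ to be a monomial.

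The paper, however, does not invoke Lemma~\ref{Iiterate} here at all; it gives a short self-contained argument using roots and the derivative. From $(F\iter{i})^{-1}(0)=\{0\}$ and the factorization $F\iter{i}=F\circ F\iter{i-1}$ one sees that $F^{-1}(0)$ is a single point $\alpha$; since $\deg F>1$ this $\alpha$ is a root of $F'$, and writing $F\iter{i}=F\iter{i-1}\circ F$ gives $F'\mid(F\iter{i})'$, a monomial, so $\alpha=0$ and $F$ is a monomial. Your alternative sketch is in this spirit but factors the composition the other way, lands on $F=aX^m+b$, and then---as you yourself note---needs an induction or $\LL$-type argument to kill $b$. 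The paper's derivative trick avoids that detour entirely. So your route is valid but leans on heavier machinery that the paper deliberately bypasses; the paper's argument is the more elementary one, while yours has the virtue of being a one-line corollary once Lemma~\ref{Iiterate} is in hand.
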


\begin{proof}
We have $(F\iter{i})^{-1}(0)=\{0\}$, so since  $F\iter{i} = F \circ F\iter{i-1}$  we see that $F^{-1}(0)$ consists
of a single element $\alpha$, which must be a root of the derivative  $F'(X)$  which in turn is a factor of 
$(F\iter{i})'(X)$.   Since  $(F\iter{i})'(X)$  is a monomial, it follows that  $\alpha=0$,  whence  $F$  is a monomial.
\end{proof}

Our proof of the analogous assertion for Chebyshev polynomials uses the following simple but unexpected result.

\begin{lemma} \label{uniq}
If $K$ is a field and $A,B,F,G\in K[X]$ are nonconstant polynomials such that  $A \circ B = F \circ G$,
$\deg(A)=\deg(F)$, and $\charp(K)\nmid \deg(A)$, then there is a degree-one $L\in K[X]$ for which 
$F=A\circ L\iter{-1}$ and $G = L \circ B$.
\end{lemma}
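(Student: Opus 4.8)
The plan is to compare both sides of $A\circ B=F\circ G$ after differentiating, and to exploit unique factorization in $K[X]$ together with the separability hypothesis $\charp(K)\nmid\deg(A)$. First I would differentiate the relation to get $(A'\circ B)\cdot B'=(F'\circ G)\cdot G'$. Since $\deg(A)=\deg(F)=:d$ is nonzero in $K$, both $A'\circ B$ and $F'\circ G$ are nonzero of the same degree, namely $(d-1)\deg(B)=(d-1)\deg(G)$, so in particular $\deg(B)=\deg(G)$; call this common value $e$. This already forces the degrees to match up on the nose, which is the structural input that makes the rest work.

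The heart of the argument is to show $B$ and $G$ have the same set of roots of $B'$ and $G'$, or more precisely that $G=L\circ B$ for a degree-one $L$. For this I would argue as follows: write $A\circ B-F\circ G=0$ and instead think of it as saying that the rational function $A$ and $F$ agree when suitably reparametrized. A cleaner route: set $H:=G\circ B\iter{?}$ — but $B$ need not be invertible, so instead I would pass to the function field. Consider $K(X)$ and the subfields $K(B)$ and $K(G)$; the equation $A\circ B=F\circ G$ shows $K(A\circ B)=K(F\circ G)\subseteq K(B)\cap K(G)$. The key claim is that $K(B)=K(G)$. Granting this, Lüroth gives $K(B)=K(G)$, and two generators of the same genus-zero degree-one-over subfield differ by a Möbius transformation: $G=L\circ B$ for some degree-one $L\in K[X]$ (it is a polynomial, not just rational, because both $B$ and $G$ are polynomials, so $L$ must fix $\infty$). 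Substituting back, $A\circ B=F\circ L\circ B$, and since $B$ is nonconstant we may cancel on the right (a nonconstant polynomial is right-cancellable under composition) to get $A=F\circ L$, i.e.\ $F=A\circ L\iter{-1}$.

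So everything reduces to proving $K(B)=K(G)$, equivalently $[K(X):K(B)]=[K(X):K(G)]$ together with one containing the other. We already know $\deg(B)=\deg(G)=e$, so $[K(X):K(B)]=[K(X):K(G)]=e$; it remains to show $K(B)\subseteq K(G)$ or vice versa. Here I would use the differentiated equation $(A'\circ B)\,B'=(F'\circ G)\,G'$ more carefully, or better, localize: at any point $P$ where the cover $G\colon\Line\to\Line$ ramifies, the composite $F\circ G$ ramifies with index a multiple of that of $G$ at $P$; comparing with the factorization through $B$ and using $\charp(K)\nmid d$ (so that $A$ and $F$ are tamely ramified and $e_P(A\circ B)=e_P(B)\cdot e_{B(P)}(A)$), one deduces that the ramification divisor of $B$ is dominated by that of $G$ and conversely, forcing $K(B)=K(G)$ by comparing with the common subfield $K(F\circ G)$. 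The main obstacle I anticipate is precisely this last step — making the ramification bookkeeping rigorous in arbitrary characteristic, where I must be careful that wild ramification is excluded exactly by the hypothesis $\charp(K)\nmid\deg(A)$, and that the "common subfield forces equality" deduction is valid when the two subfields have equal index over $K(X)$. Once $K(B)=K(G)$ is established, the conclusion follows purely formally as above.
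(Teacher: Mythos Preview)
The paper does not give its own proof of this lemma; it cites Ritt's Riemann-surface argument for $K=\C$ and, for general fields, Levi's elementary coefficient-comparison proof, which Turnwald observed goes through verbatim under the hypothesis $\charp(K)\nmid\deg(A)$. Your function-field strategy is therefore genuinely different from the proof the paper appeals to. The reduction you outline---show $K(B)=K(G)$, deduce $G=L\circ B$ for a degree-one $L$ by L\"uroth (with $L$ a polynomial since $B,G$ are), then cancel $B$ on the right to get $F=A\circ L\iter{-1}$---is sound once that equality of subfields is established.

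The gap you flag, however, is real, and the ramification sketch you offer does not close it. From $e_P(B)\,e_{B(P)}(A)=e_P(G)\,e_{G(P)}(F)$ at a finite point $P$ one cannot conclude $e_P(B)=e_P(G)$, since $e_{B(P)}(A)$ and $e_{G(P)}(F)$ are not controlled; so ``the ramification divisor of $B$ is dominated by that of $G$ and conversely'' is unjustified. Even granting equal finite ramification divisors, deducing $K(B)=K(G)$ from that would need $\charp(K)\nmid e$, which is not assumed. The place that actually pins down the intermediate field is $\infty$: in $K(X)/K(A\circ B)$ the place at infinity is totally ramified, and the sub-extensions $K(B)$ and $K(G)$ over $K(A\circ B)$ both have degree $d$, hence are tamely totally ramified at $\infty$ since $\charp(K)\nmid d$. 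In the Galois closure of the local extension at $\infty$, each corresponds to the unique index-$d$ subgroup of inertia containing the wild inertia (the tame quotient being cyclic), so their completions at $\infty$ coincide; total ramification at $\infty$ then forces $[K(B)K(G):K(A\circ B)]=d$, whence $K(B)=K(G)$. This route can be made rigorous but is markedly heavier than Levi's: there one just writes $A(X)=\alpha(X-a)^d+(\text{terms of degree }\le d-2)$ and likewise for $F$ (possible because $p\nmid d$), and compares the top block of coefficients in $A\circ B=F\circ G$ to read off directly that $G$ is a degree-one polynomial in $B$.
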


Lemma~\ref{uniq} was first proved for $K=\C$ by Ritt \cite{Ritt1922}, via Riemann surface techniques, and was
later proved by Levi \cite[\S 2]{Levi} by comparing coefficients.
Although Levi stated the result only for fields of characteristic $0$, his proof works in arbitrary characteristic, as noted in
\cite[Prop.~2.2]{Turnwald}.

Note that the definition of Chebyshev polynomials implies that  $T_i \circ T_j = T_{ij}$ and 
$T_m(-X)=(-1)^m T_m(X)$ (so that $T_m$ has gap form).   Also, if $m>1$ and $\charp(K)\nmid m$ then the two 
highest-degree terms of $T_m$ are $X^m$ and $-mX^{m-2}$, so that $\LL(T_m)=2$.

\begin{lemma} \label{chebit}
Let $K$ be a field, and let $F(X)\in K[X]$.  If $F\iter{i}(X)=\alpha T_n(X)$ for some $\alpha\in K^*$ and some
$i,n>1$ with $\charp(K)\nmid n$, then $\alpha\in\{1,-1\}$ and
$F(X) = \beta T_m(X)$ for some $\beta\in\{1,-1\}$ and some $m>1$.
\end{lemma}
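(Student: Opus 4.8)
The plan is to reduce to the case $\alpha = 1$ and then extract a Chebyshev "square root" of $T_n$ by exploiting the uniqueness statement of Lemma~\ref{uniq}. First I would observe that $\alpha T_n(X)$ has gap form (since $T_n$ has gap form), so $\LL(\alpha T_n) = \LL(T_n) = 2$, and by Lemma~\ref{Iiterate} applied to $F$ we get $\LL(F) = \LL(F\iter{i}) = 2$; in particular $F$ is not a monomial, $F$ has gap form, and $\deg(F) = m$ where $m^i = n$, so $m > 1$. Next I would pin down $\alpha$: iterating $F\iter{i} = \alpha T_n$ one more time, $F\iter{i}\circ F = F\circ F\iter{i}$ gives $\alpha T_n\circ F = F\circ \alpha T_n$, and comparing leading coefficients (both $F$ and $T_n$ are monic) forces $\alpha^n = \alpha$, so $\alpha^{n-1} = 1$. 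That alone is not enough, so I would instead argue as follows: since $\LL(F) = 2$, write $F(X) = X G(X^2)$ or $F(X) = H(X^2)$ according to the parity of $\deg F$ — actually the cleaner route is to use Lemma~\ref{parity}: since $\LL(F\iter{i}) = 2$, we have $(\alpha T_n)(-X) = (-1)^n \alpha T_n(X)$ automatically, and the relevant constraint comes from the "half-iterate" we are about to construct.

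The core step: I want to produce $\beta$ and $m$ with $F = \beta T_m$. Consider the identity $T_n = T_m \circ T_m \circ \cdots$ ($i$ times), i.e.\ $T_n = T_m\iter{i}$ when $m^i = n$ — but this is what we are trying to prove, so instead I start from $F\iter{i} = \alpha T_n$ and compare it with the genuine factorization $\alpha T_n = (\alpha T_m)\circ T_{m}\iter{i-1}$... the issue is the placement of $\alpha$. Here is the argument I would actually carry out. Since $F\iter{i} = \alpha T_n$ and $\alpha T_n = T_m \circ (\alpha T_{n/m})$ whenever $m \mid n$ — wait, I need $\alpha T_n = T_m(\alpha' T_{n/m})$, which by Lemma~\ref{parity} requires $(\alpha')^{\LL(T_{n/m})} = (\alpha')^2$ to match up; let me just say $\alpha T_n = (\alpha T_m)\circ T_{m^{i-1}}$ using $T_m \circ T_{m^{i-1}} = T_n$ and pulling the scalar out front. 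Now I have
\[
F \circ F\iter{i-1} = F\iter{i} = (\alpha T_m)\circ T_{m^{i-1}},
\]
and $\deg(F) = m = \deg(\alpha T_m)$ with $\charp(K)\nmid m$, so Lemma~\ref{uniq} yields a degree-one $L\in K[X]$ with $F = (\alpha T_m)\circ L\iter{-1}$ and $F\iter{i-1} = L \circ T_{m^{i-1}}$. Then I would iterate this: from $F\iter{i-1} = L\circ T_{m^{i-1}}$ and $F\iter{i-1} = F\circ F\iter{i-2} = (\alpha T_m)\circ L\iter{-1}\circ F\iter{i-2}$, peel off one factor at a time, at each stage applying Lemma~\ref{uniq} to the leftmost $T_m$; after $i-1$ steps this should force $L$ to be "compatible" with the Chebyshev structure. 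More efficiently: the relation $F\circ F\iter{i-1} = F\iter{i-1}\circ F$ combined with $F = (\alpha T_m)\circ L\iter{-1}$ and $F\iter{i-1} = L\circ T_{m^{i-1}}$ gives, after substituting, $(\alpha T_m)\circ L\iter{-1}\circ L\circ T_{m^{i-1}} = L\circ T_{m^{i-1}}\circ (\alpha T_m)\circ L\iter{-1}$, i.e.\ $(\alpha T_m)\circ T_{m^{i-1}} = L\circ T_{m^{i-1}}\circ (\alpha T_m)\circ L\iter{-1}$. Comparing degrees of the leftmost terms and using Lemma~\ref{uniq} repeatedly (or directly, since both sides equal $F\iter{i}$ conjugated appropriately), I would deduce $L\iter{-1}\circ (\text{something Chebyshev})\circ L$ is again Chebyshev, and then the rigidity of Chebyshev polynomials under conjugation — the only degree-one $L$ with $L\iter{-1}\circ T_k\circ L = T_k$ or $= -T_k(-X)$ type relations are $L = \pm X$ — finishes it.

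Cleaner alternative that I would probably prefer for the writeup: apply Lemma~\ref{uniq} in the form $F\iter{i} = F\iter{i-1}\circ F$ and $F\iter{i} = \alpha T_n = T_{m^{i-1}}\circ (\alpha T_m)$, so with $\deg(F\iter{i-1}) = m^{i-1} = \deg(T_{m^{i-1}})$ and $\charp(K)\nmid m^{i-1}$, Lemma~\ref{uniq} gives a degree-one $M$ with $F\iter{i-1} = T_{m^{i-1}}\circ M\iter{-1}$ and $F = M\circ (\alpha T_m)$. Thus $F = M\circ\alpha T_m$ where $M$ is linear — now I need to show $M\circ\alpha X$ together with the $T_m$ forces $M\circ\alpha X = \pm X$ up to the right symmetry. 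Plugging $F = M\circ\alpha T_m$ back into $F\iter{i-1} = T_{m^{i-1}}\circ M\iter{-1}$: compute $F\iter{i-1}$ directly from $F = M\circ\alpha T_m$, noting $T_m$ commutes with all $T_k$, and compare with $T_{m^{i-1}}\circ M\iter{-1}$; this is a finite computation forcing $M\circ\alpha X$ to be a scalar multiple $\beta X$ of $X$, hence $F = \beta T_m$, and then $\beta^{\LL(T_m)} = \beta^2$ must be $1$ (since $F = \beta T_m$ has gap form and we can match the symmetry via Lemma~\ref{parity}, or simply because $F\iter{i} = \beta^{1 + m + \cdots + m^{i-1}} T_n = \alpha T_n$ forces that power of $\beta$ to be a power of $\alpha$... ), ultimately giving $\beta\in\{1,-1\}$ and correspondingly $\alpha\in\{1,-1\}$.

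The main obstacle I anticipate is controlling the scalar $\alpha$ (and the linear factor $M$) precisely: Lemma~\ref{uniq} hands back a degree-one $L$ or $M$ with no a priori constraint, and it takes a genuine argument — iterating the functional equation and using that $T_m$ has gap form together with Lemma~\ref{parity} and the explicit fact $\LL(T_m) = 2$ — to force that linear map to be $\pm X$ and hence $\alpha$ to be $\pm 1$. Getting the bookkeeping of scalars right through the repeated applications of Lemma~\ref{uniq}, so that the $(m-1)$-th-power / $\LL$-th-power conditions come out exactly as $\beta^2 = 1$, is the delicate part; everything else is degree arithmetic ($m^i = n$, $m > 1$) and routine use of the earlier lemmas.
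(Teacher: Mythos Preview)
Your overall strategy --- apply Lemma~\ref{uniq} to peel off a Chebyshev factor and write $F$ as a linear composed with $T_m$, then argue that the linear must be $\pm X$ --- is exactly the paper's approach. But the proposal does not actually carry out the key step, and one of your suggested ways to finish is circular.

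Concretely, two things are missing. First, you never show that the degree-one map produced by Lemma~\ref{uniq} has zero constant term. In your ``cleaner alternative'' you obtain $F\iter{i-1}=T_{m^{i-1}}\circ M\iter{-1}$ and $F=M\circ\alpha T_m$, but then jump to ``a finite computation forcing $M\circ\alpha X$ to be a scalar multiple $\beta X$.'' The paper handles this by applying Lemma~\ref{uniq} at every level $j$: from $\alpha T_{n/m^j}\circ T_{m^j}=F\iter{i-j}\circ F\iter{j}$ one gets $F\iter{j}=L_j\circ T_{m^j}$ and $\alpha T_{n/m^j}=F\iter{i-j}\circ L_j$. Lemma~\ref{gap} then does real work twice: the first identity shows each $F\iter{j}$ (hence $F$, hence every $F\iter{i-j}$) has gap form, and then the second identity, together with $T_{n/m^j}$ having gap form, forces $L_j$ to have gap form, i.e.\ $L_j(0)=0$. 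Your sketch never isolates an equation of the shape $(\text{gap form})=(\text{gap form})\circ L$ to which Lemma~\ref{gap} can be applied to kill the constant term.

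Second, your proposed route to $\beta\in\{1,-1\}$ via ``$F\iter{i}=\beta^{1+m+\cdots+m^{i-1}}T_n=\alpha T_n$'' is circular: the identity $(\beta T_m)\iter{i}=\beta^{1+m+\cdots+m^{i-1}}T_{m^i}$ uses $T_m(\beta X)=\beta^m T_m(X)$, which by Lemma~\ref{parity} already presupposes $\beta^{\LL(T_m)}=\beta^2=1$. The paper instead compares the two levels $j=1$ and $j=2$: substituting $F=L_1\circ T_m$ into $F\iter{2}=L_2\circ T_{m^2}$ and cancelling the right $T_m$ gives $L_1\circ T_m\circ L_1=L_2\circ T_m$; since $L_1(X)=\beta X$, Lemma~\ref{parity} applied to the inner $L_1$ now legitimately yields $\beta^{\LL(T_m)}=\beta^2=1$. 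That two-level comparison is the missing idea in your sketch. Once $F=\beta T_m$ with $\beta=\pm 1$, the conclusion $\alpha=\pm 1$ follows immediately by comparing leading coefficients in $F\iter{i}=\alpha T_n$.
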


\begin{proof}
For $m:=\deg(F)$ and any $j$ with $1\le j\le i$ we have
\[
\alpha T_{n/m^j} \circ T_{m^j} = \alpha T_n = F\iter{i} = F\iter{i-j} \circ F\iter{j},
\]
so by Lemma~\ref{uniq} there is a degree-one $L_j\in K[X]$ for which  $F\iter{j} =  L_j\circ T_{m^j}$ and $\alpha T_{n/m^j}=F\iter{i-j}\circ L_j$.
Thus $F\iter{j}$ has gap form, so Lemma~\ref{gap} implies that $F$ has gap form and thus
 $F\iter{i-j}$ has gap form.  Since also $T_{n/m^j}$ has gap form, and $\alpha T_{n/m^j}=F\iter{i-j}\circ L_j$, Lemma~\ref{gap} implies that $L_j$ has gap form, 
whence $L_j(0)=0$.  Substituting $F=L_1\circ T_m$ into $F\iter{2}=L_2\circ T_{m^2}$ yields
\[
L_1\circ T_m\circ L_1\circ T_m = L_2\circ T_m\circ T_m,
\]
so that
\[
L_1\circ T_m\circ L_1 = L_2\circ T_m.
\]
Here $L_1(X)=\beta X$, and Lemma~\ref{parity} implies that $\beta^{\LL(T_m)}=1$.
Since $\LL(T_m)=2$, we conclude that $\beta\in\{1,-1\}$, where $F=\beta T_m$.
Finally, comparing leading coefficients in $\alpha T_n(X)=F\iter{i}(X)$ yields $\alpha\in\{1,-1\}$.
\end{proof}

We now use the above results to prove Theorem~\ref{poly}.

\begin{proof}[Proof of Theorem~\ref{poly}]
By Theorem~\ref{ratfun} with $B=D=1$, we know that (a) is equivalent to (b).
We first show that (c) implies (a).  
If $F$ and $G$ are as in \eqref{poly1} then (b) and hence (a) holds.
Next consider $F:=\alpha T_m$
and $G:=\beta T_n$ with $\alpha,\beta\in\{1,-1\}$. 
If $m$ is even then $F\iter{2} \circ G = F \circ G \circ F$; likewise if $n$ is even then 
$G\iter{2}\circ F=G\circ F\circ G$, and finally if both $m$ and $n$ are odd then $F \circ G = G \circ F$.
Next, if $F$ and $G$ are as in \eqref{poly3} then
 $F\iter{j} = \gamma G\iter{i}$ for some $\gamma\in K^*$
with $\gamma^s=1$.  Let $a,b$ be  positive integers for which $r^{ia}(1+r^{ij}+r^{2ij}+\dots+r^{(b-1)ij})$
is divisible by $s$.  Then
\begin{align*}
F\iter{a+jb} &= F\iter{a} \circ \gamma^{1+r^{ij}+r^{2ij}+\dots +r^{(b-1)ij}} G\iter{ib} \\
&= \gamma^{r^{ia}(1+r^{ij}+r^{2ij}+\dots+r^{(b-1)ij})}  F\iter{a} \circ G\iter{ib} \\
&= F\iter{a} \circ G\iter{ib}.
\end{align*}

It remains to prove that (a) implies (c).  Thus we assume now that the semigroup generated by $F$ and $G$
 under composition is not the
free semigroup on the letters $F$ and $G$.   By Theorem~\ref{ratfun} with $B=D=1$,
we know that  $F\iter{r} \circ G$ commutes with  $F\iter{r}$  for some $r>0$.  By Proposition~\ref{commute},
there is a degree-one $L\in K[X]$ for which the conjugates $L\iter{-1} \circ F\iter{r} \circ G \circ L$ and 
$L\iter{-1} \circ F\iter{r} \circ L$ are one of \eqref{commute1},
 \eqref{commute2} or \eqref{commute3}.  Replace  $F$  and  $G$
by  $L\iter{-1} \circ F \circ L$  and  $L\iter{-1} \circ G \circ L$,  so that  $F\iter{r} \circ G$  and  $F\iter{r}$
  are one of \eqref{commute1}, \eqref{commute2} or \eqref{commute3};
this replacement does not affect whether or not the conclusion of (c) in Theorem~\ref{poly} is true.

First assume that  $F\iter{r} \circ G = \alpha X^m$  and  $F\iter{r} = \beta X^n$  where  $\alpha,\beta\in K^*$.
Then Lemma~\ref{monit} implies that  $F$  is a monomial, so that also $G$ is a monomial.  By replacing $F$ and 
$G$ by their conjugates by $\gamma X$ for a suitable $\gamma\in K^*$, we may assume that $F$ is monic. 
 Finally, Lemma~\ref{rtof1} implies that the leading coefficient of $G$ is a root of unity, so $F$ and $G$ are as in
  \eqref{poly1}.

Next assume that  $F\iter{r} \circ G = \alpha T_m$  and  $F\iter{r} = \beta T_n$  where  $\alpha,\beta\in\{1,-1\}$.
By Lemma~\ref{chebit} we have $F=\delta T_j$ for some $\delta\in\{1,-1\}$.  We compute
\[
\alpha T_n\circ T_{m/n} = F\iter{r}\circ G = \beta T_n\circ G,
\]
so by Lemma~\ref{uniq} there is a degree-one $L\in K[X]$ for which $G=L\circ T_{m/n}$ and
$\alpha T_n = \beta T_n\circ L$.  Since $T_n$ has gap form, this last identity implies that $L$ has gap form
by Lemma~\ref{gap}.  Thus from Lemma~\ref{parity} and the fact that $\LL(T_n)=2$ we conclude that
$L(X)=\pm X$, so that $G=\pm T_{m/n}$.  Therefore $F$ and $G$ are as in~\eqref{poly2}.

Finally, assume that  $F\iter{r} \circ G = \alpha H\iter{i}$  and  $F\iter{r} = \beta H\iter{j}$  where  $i,j>0$ and  
$\alpha,\beta\in K^*$ satisfy  $\alpha^{\LL(H)}=\beta^{\LL(H)}=1$ with $H\in K[X]$.
If $\LL(H)\ne 1$ then $H$ has gap form; if $\LL(H)=1$ then $\alpha=\beta=1$, and we may conjugate $F,G,H$ by a 
translation in order to put $H$ into gap form, where such conjugation preserves the two relations  
$F\iter{r} \circ G = H\iter{i}$ and  $F\iter{r} = H\iter{j}$.  Thus in any case we may assume that  $H$  is in gap form,
so by Proposition~\ref{commoniterate} and Remark~\ref{sameI}, the equation  $F\iter{r} = \beta H\iter{j}$
 implies that  $\LL(F)=\LL(H)$  and that there is a polynomial $A(X) \in K[X]$ which has gap form and satisfies  
 $\LL(F)\mid \LL(A)$  and  $F = \gamma A\iter{s}$  and  $H = \delta A\iter{t}$  for some $s,t>0$
and some $\gamma,\delta\in K^*$ such that
$\gamma^{\LL(F)}=\delta^{\LL(F)}=1$.  Substituting into the equation  $F\iter{r} \circ G = \alpha H\iter{i}$  yields  
$A\iter{rs} \circ G = \epsilon A\iter{it}$  where
$\epsilon^{\LL(F)}=1$.  Write this as  $A\iter{rs} \circ G = \epsilon A\iter{rs} \circ A\iter{it-rs}$, and use
Lemma~\ref{uniq} to conclude that $G = \eta + \zeta A\iter{it-rs}$  for some $\eta\in K$ and $\zeta\in K^*$, 
so that  $A\iter{rs} \circ (\zeta X+\eta) = \epsilon A\iter{rs}$.
Since $A$ has gap form, also $A\iter{rs}$ has gap form, so we must have $\eta=0$.  Lemma~\ref{parity} implies
 that  $\zeta^{\LL(A\iter{rs})}=1$, so $\zeta^{\LL(A)}=1$ by Lemma~\ref{Iiterate}.
Therefore  $G = \zeta A\iter{it-rs}$  and  $F =\gamma A\iter{s}$  where  $\zeta^{\LL(A)}=\gamma^{\LL(A)}=1$.
If $\LL(A)>0$ then $F$ and $G$ are as in \eqref{poly3}.
Finally, if $\LL(A)=0$ then we may replace $F$ and $G$ by their conjugates by $\theta X$ for a suitable
$\theta\in K^*$, in order to assume that $F=X^m$ and $G=\lambda X^n$ with $\lambda\in K^*$.
Lemma~\ref{rtof1} implies that $\lambda$ is a root of unity, so that $F$ and $G$ are as in \eqref{poly1}.
\end{proof}


\end{document}